\newcommand\cX{\mathscr X}
\newcommand\cY{\mathscr Y}
\newcommand\cN{\mathcal N}
\newcommand\cM{\mathcal M}
\newcommand\Ker{\mathop{\mathrm{Ker}}}
\newcommand\Span{\mathop{\mathrm{span}}}
\newcommand{\sign}{{\mathrm{sign}}}
\newcommand{\im}{\mathop{\mathrm{Im}}}
\newcommand{\re}{\mathop{\mathrm{Re}}}
\def\CC{\mathbb C}
\def\NN{\mathbb N}
\def\RR{\mathbb R}
\def\FF{\mathbb F}
\def\PP{\mathbb P}
\newtheorem{theorem}{Theorem}[section]
\newtheorem{lemma}[theorem]{Lemma}
\newtheorem{proposition}[theorem]{Proposition}
\theoremstyle{remark}
\newtheorem{example}[theorem]{Example}
\newtheorem{remark}[theorem]{Remark}
\newtheorem{question}[theorem]{Question}
\begin{document}
\author[Lj. Aramba\v si\' c]{Ljiljana Aramba\v si\' c $^{1}$}\address{$^1$ Department of Mathematics, Faculty
of Science, University of Zagreb, Bijeni\v cka cesta 30, 10000 Zagreb, Croatia} \email{arambas@math.hr}

\author[A.  Guterman]{Alexander  Guterman $^{2}$}

\address{$^{2\ 5}$ Department of Mathematics and Mechanics, Lomonosov Moscow State University, Moscow, 119991, Russia}
\address{$^{2\ 5}$ Moscow Center for Continuous Mathematical Education, Moscow, 119002, Russia}
\address{$^{2\ 3\ 5}$ Moscow Center for Fundamental and Applied Mathematics, Moscow, 119991, Russia}
\address{$^{2}$ Moscow Institute of Physics and Technology, Dolgoprudny, 141701, Russia}
\email{guterman@list.ru} \email{s.a.zhilina@gmail.com}

\thanks{$^2$ $^5$ The work was financially supported by RSF grant 21-11-00283}

\author[B. Kuzma]{Bojan Kuzma $^3$}
\address{$^3$ University of Primorska, Glagolja\v{s}ka 8, SI-6000 Koper, Slovenia, and IMFM, Jadranska 19, SI-1000 Ljubljana, Slovenia}
\email{bojan.kuzma@upr.si}
\thanks{$^3$  This work is supported in part by the Slovenian Research Agency (research program P1-0285 and research project N1-0210).}

\author[R. Raji\' c]{Rajna Raji\' c $^4$}
\address{$^{4}$ Faculty of Mining, Geology and Petroleum Engineering, University of Zagreb, Pierottijeva 6, 10000 Zagreb, Croatia}
\email{rajna.rajic@rgn.hr}

\author[S.  Zhilina]{Svetlana  Zhilina $^5$}

\keywords{Normed vector space, Birkhoff--James orthogonality,  graph, diameter, clique.}
\subjclass{46B20; 05C20}


\title[What does BJ orthogonality know\dots]{What does Birkhoff--James orthogonality know about the norm?}
\maketitle
\begin{abstract}
It is shown that Birkhoff--James orthogonality knows everything about the smooth norms in reflexive Banach spaces and can also  compute the dimensions of the underlying normed spaces.
\end{abstract}
\section{Introduction and preliminaries}
General  normed spaces are not equipped with an inner product.  Nonetheless, there do exist several nonequivalent
extensions of orthogonality from inner product spaces to general normed ones.  Arguably, one of the most  well-known in this respect is the Birkhoff--James orthogonality, introduced by Birkhoff~\cite{Birkhoff} and developed by
James~\cite{James2,James,James1}. It is defined in  an arbitrary normed space $(\cX,\|\cdot\|)$
over the field $\FF\in\{\RR,\CC\}$ as follows: the vector $x\in\cX$   \emph{is
Birkhoff--James orthogonal to $y\in\cX$} (shortly: BJ orthogonal; in symbols: $x\perp y$), if
$$\|x+\lambda y\|\ge \|x\|,\quad \forall \lambda\in \FF.$$

One easily sees that in inner product spaces BJ orthogonality is equivalent to the usual one. More importantly, several key properties of orthogonality in inner product spaces are inherited within BJ orthogonality.  For example,  among all the vectors from a subspace $\cY\subseteq\cX$ a vector $y_0\in\cY$ is the best approximation to $x$ (in the  sense that $\|x-y_0\|=\min_{y\in\cY}\|x-y\|$)
precisely when $x-y_0$ is BJ orthogonal to $\cY$. Another trivial implication, valid for every BJ orthogonal (i.e., $\cM\perp\cN$)
 closed
subspaces $\cM, \, \cN$  in a Banach space~$\cX$ is that they have trivial intersection (excuse the pun) and their sum, $\cM+\cN$ is also a closed subspace --- this   can be seen by noting that $\cM\perp\cN$ implies that the projection from $\cM+\cN$ onto $\cM$
is norm-one.
In this respect, we  mention here a classical result by Anderson~\cite{Anderson} that the range of an inner-derivation, induced by a normal operator $N$ on a complex Hilbert space, is always BJ orthogonal to its kernel (this result has been greatly extended in various directions \cite{Kit}).
Among other applications of BJ orthogonality we mention Bhatia--\v Semrl's alternative proof that the diameter of the unitary orbit of a given complex matrix $A$ equals $2d(A,\CC I)$ (see~\cite{BhatiaSemrl}).

It is an exercise  that BJ orthogonality is  in general nonsymmetric, that is, $x\perp  y$ does not always
imply  $y\perp  x$ (e.g., in the space  $(\FF^2,\|\cdot\|_{\infty})$ where $\|\cdot\|_\infty$ is the maximum norm we have
$(1,1)\perp  (1,0)$ but $(1,0)\not\perp  (1,1)$). In fact, James showed~\cite{James1} that in real spaces of dimension at least three BJ orthogonality is symmetric if and only if the norm is induced by an inner product.  His proof works also for complex normed spaces almost with no change, except that  instead of Kakutani's~\cite{Kakut} classification of real inner product spaces (being the ones where for each two-dimensional subspace there exists a norm-one  projection onto it), one has to use  Bohnenblust's~\cite{Bohnenblust} extension
 of Kakutani's classification to complex spaces. We refer the interested reader to the book by Amir \cite{Amir} where several other classifications of inner-product spaces are found,  including  a self-contained proof of Kakutani's classification (see \cite[\S12]{Amir}).

Thus, BJ orthogonality alone can detect if the norm comes from an inner product or not.  For two-dimensional spaces such a nice characterization does not exist though; the two-dimensional real vector spaces where BJ orthogonality is symmetric are called Radon planes, an example of which can be found in \cite[p.~561]{James1}. We refer to \cite{Martini-Swanepoel-Weiss} for much more information as well as historical remarks on this topic.

There are several additional characterizations of inner product spaces in \cite{James1} but they use  BJ orthogonality relation and additivity operation in unison.
One such, for example, is the following:  a real space $\cX$  of dimension at least three is an inner product space if and only if $x\perp  z$ and $y\perp  z$ always imply that $(x+y)\perp  z$ (see \cite[Theorem~2]{James1}). In a similar way, it was shown in~\cite{James} that BJ orthogonality together with additivity (and homogeneity operations)
can characterize smooth (and rotund, that is,  strictly convex) norms.

Our principal aim here is to study in more details what additional properties of the norm are possible to discern from BJ orthogonality alone, that is, without using it in unison with the underlying linear structure.  Similarly to James' and Bohnenblust's result that BJ orthogonality knows when the
space is Euclidean we show (in Section 2) that BJ orthogonality alone knows when the space is finite-dimensional; it can compute its dimension, it knows when the norm  is smooth and when it is strictly convex, and actually (see Section 3) knows everything about the smooth norms of reflexive Banach  spaces up to (conjugate) linear isometry.

 The easiest way to formulate and prove our results is to use the directed graph, denoted by $\hat{\Gamma}=\hat{\Gamma}(\cX)$, induced by the BJ orthogonality relation on a normed space $(\cX,\|\cdot\|)$. By definition, the vertex set of this (di)graph consists of one-dimensional subspaces of $\cX$
$$V(\hat{\Gamma})=\{[x]=\FF x;\;\;x\in\cX\setminus\{0\}\}$$
with two vertices  $[x],[y]\in V(\hat{\Gamma})$ forming a directed edge $[x]\to[y]$
if and only if $x\perp  y$. Observe that this relation is well-defined because BJ orthogonality is homogeneous in both factors. Observe
also that no nonzero vector is BJ orthogonal to itself, so $\hat{\Gamma}$ has no loops.
 Alternatively, we could define $\hat{\Gamma}$ over $\cX\setminus\{0\}$;  it seems, however, more natural to work
over projective spaces.

We call this graph  \emph{di-orthograph} to amplify the fact that its edges are directed. Now,  James' and Bohnenblust's result, which we already mentioned above, says that three or more dimensional normed spaces $\cX$ are inner product spaces if and only if the di-orthograph $\hat{\Gamma}=\hat{\Gamma}(\cX)$ is a simple graph, meaning that  for every directed edge $([x],[y])\in\hat{\Gamma}$, its opposite, $([y],[x])$, is also an edge in $\hat{\Gamma}$.

The  motivation behind our study is two-fold. Firstly, it has become clear recently that graphs of certain
relations defined on objects of a given category  carry a lot of information. For example, a commuting graph
whose vertices are all noncentral elements and two commuting elements are connected can distinguish a finite
nonabelian simple group among all groups~\cite{ABDOLLAHI-SHAHVERDI,Han-Chen-Guo,Sol-Wol} and can  also
distinguish among different Hilbert spaces~\cite{Kuzma}. Also, as it was  shown recently
in~\cite{Dol-Kuz-Sto1}, the graph related to Jordan  orthogonality distinguishes among finite-dimensional
formally real simple Jordan algebras.

As for the second motivation,  orthographs were used in the recent paper \cite{Dol-Kuz-Sto} where a
complete classification of maps which preserve orthogonality on finite-dimensional projective spaces over
reals, complexes, quaternions or octonions  is given without assuming surjectivity. Such a map is nothing but a
(possibly nonsurjective) homomorphism of the corresponding orthograph, and so it maps the maximum clique (i.e., the subset of pairwise connected vertices of maximal possible cardinality)  onto
another maximum clique. In this respect, the information about orthograph is useful for the investigations of
maps preserving the orthogonality  (cf. with~\cite{Blanco-Turnsek,Woj} where, in addition, linearity or at least additivity was assumed), which we are planning to do in a separate paper.

Before proceeding with our main results, let us remark that it is often easier to verify  BJ orthogonality with the help of James' criteria~\cite[Theorem 2.1]{James}  by which $x\perp y$ if and only if there exists a supporting functional $f_x$ at~$x$ (i.e., a  norm-one linear functional $f_x\colon \cX\to\FF$ with $f_x(x)=\|x\|$ which annihilates~$y$). One should mention here  that James in~\cite[Theorem 2.1]{James} assumed real normed spaces but the proof works also for the complex ones (the necessity  part of the proof only requires the fact that if unit vectors satisfy $x\perp y$, then a linear functional $f$ which maps $x$ to $1$ and annihilates~$y$  is norm-one on the span of $x,y$; using the Hahn--Banach  theorem one then extends the domain of this $f$ from $\Span_{\CC}\{x,y\}$ to $\cX$).  We refer also to  Kittaneh~\cite[Lemma 1]{Kit} for a shorter proof in real or complex spaces in case the norm is smooth at $x$.

By James' result, a line $[x]=\FF x\subseteq\cX $ is BJ orthogonal to a line $[y]=\FF y$ if and only if there exists a supporting functional $f_x$ at
some representative $x\in[x]$ which annihilates the line $[y]$. Recall that, by our definition,  a supporting
functional at a vector $x$  must satisfy $f_x(x)=\|x\|$ and $\|f_x\|=1$. We will call any such functional \emph{a supporting functional of a line $[x]$}. In particular, if $f_x$ is a supporting functional at a line $[x]$, then $\mu f_x$ is also, provided that $|\mu|=1$.

To simplify things we will frequently not
distinguish between a nonzero vector $x\in\cX$ and the line $[x]=\FF x\subseteq\cX$ passing through it. Thus, we will often denote
vertices of di-orthograph $\hat{\Gamma}(\cX)$ simply by $x$ instead of $[x]$ but we  do  call them lines.

Just before we finished our paper we learned that Tanaka very recently obtained similar results to ours in \cite{Tanaka1,Tanaka2};  in particular he also obtained a version of one of  our main results, i.e., Theorem~\ref{thm:iso} below. However,  our  methods are different and give some additional insights. Moreover, our Lemma~\ref{lem:sigma-preserves-orthogonality} answers  a question posted in \cite[section Remarks]{Tanaka2}.

\section{Main results -- property recognition}

The fundamental question which we address here is the following:
\begin{question}
Let ${\mathcal P}$ be a given property on a normed space. Can we decide, using BJ orthogonality alone, if a space has  property ${\mathcal P}$  or not?
\end{question} Of course, this  is too general for any hope of a definite answer and we will restrict ourselves to three very specific properties: the dimension, the smoothness, and the rotundness.

Let us first show that by means of $\hat{\Gamma}(\cX)$ one can calculate the dimension of a normed
space $\cX$.

\begin{proposition}\label{prop:inft-dim} A normed
space $\cX$ over $\FF$ is infinite-dimensional if and only if $\hat{\Gamma}(\cX)$ contains an infinite clique.
\end{proposition}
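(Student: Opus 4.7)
I would prove the proposition by establishing both implications separately. For the easy direction ($\Leftarrow$), suppose $\hat{\Gamma}(\cX)$ contains an infinite clique $\{[x_i]\}_{i\in I}$ of pairwise mutually BJ-orthogonal lines. The plan is to show that the representatives $x_i$ are linearly independent, whence $\dim\cX=\infty$. By James' criterion, for every ordered pair $(i,j)$ with $i\ne j$ there is a norm-one functional $f_{ij}$ supporting $x_i$ with $f_{ij}(x_j)=0$. Given a hypothetical finite relation $\sum c_l x_l=0$ with $c_1\ne 0$, substituting $x_1=-c_1^{-1}\sum_{l\ge 2}c_l x_l$ and evaluating both the $f_{1j}$ (which give one-sided bounds on $\|x_1\|$) and the $f_{j1}$ (which yield bounds on $|c_j|\,\|x_j\|$) produces a symmetric chain of inequalities of the form $|c_j|\,\|x_j\|\le\sum_{l\ne 1,j}|c_l|\,\|x_l\|$; combining these forces a contradiction, essentially because inside the finite-dimensional span the maximal mutually BJ-orthogonal clique has size equal to the dimension (Auerbach's lemma).

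For the interesting direction ($\Rightarrow$), assume $\cX$ is infinite-dimensional, and the plan is to construct an infinite \emph{Auerbach system} $\{(x_n,f_n)\}_{n\in\NN}$, i.e.\ norm-one pairs with $f_n(x_m)=\delta_{nm}$. Once such a system is in hand, James' criterion makes each pair of lines mutually BJ-orthogonal ($f_n$ witnesses $x_n\perp x_m$ and $f_m$ witnesses the reverse), so $\{[x_n]\}$ is the desired infinite clique. Inductively: pick any unit $x_1$ and, via Hahn--Banach, a supporting functional $f_1$. Given $(x_i,f_i)_{i\le n}$, set $M_n=\Span\{x_1,\dots,x_n\}$ and $V_n=\bigcap_{i\le n}\ker f_i$. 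Biorthogonality forces $f_1,\dots,f_n$ to be linearly independent, so $V_n$ has codimension exactly $n$ and $\cX=V_n\oplus M_n$; in particular $V_n$ is infinite-dimensional. Next, locate a unit $x_{n+1}\in V_n$ that is BJ-orthogonal to $M_n$; the subspace version of James' criterion then furnishes a norm-one $f_{n+1}$ supporting $x_{n+1}$ with $f_{n+1}(M_n)=0$. Because $x_{n+1}\in V_n$, the previous $f_i$ automatically annihilate $x_{n+1}$, so the Auerbach system extends to size $n+1$.

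The hard part is the italicised existence statement at each inductive step: a unit vector $x_{n+1}\in V_n$ with $x_{n+1}\perp M_n$. Since $M_n$ is finite-dimensional, for any $z\in V_n\setminus\{0\}$ a best approximation $m_0\in M_n$ exists and $z-m_0\perp M_n$, but $z-m_0$ generally leaves $V_n$, so one cannot simply normalise it. The strategy I would try is to apply Auerbach's lemma inside the $(n{+}1)$-dimensional subspace $M_n\oplus\FF z$ for a well-chosen $z\in V_n$: this supplies a unit vector in that slice BJ-orthogonal to $M_n$, and one then exploits the direct-sum decomposition $\cX=V_n\oplus M_n$ to pull a representative back into $V_n$. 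Carrying this through in an arbitrary normed space---without assuming reflexivity, completeness, or separability---is the technical heart of the argument, and is what I expect to be the main obstacle.
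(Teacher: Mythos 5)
Your proposal does not match the paper, which proves Proposition~\ref{prop:inft-dim} simply by citing \cite[Corollary 3.1 and Theorem 3.5]{Ara-Gut-Kuz-Raj-Zhi}; judged on its own merits, your argument has genuine gaps in \emph{both} directions.

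The ($\Leftarrow$) direction is based on a false premise. Pairwise mutually BJ orthogonal vectors need \emph{not} be linearly independent, and the maximal mutual-orthogonality clique in an $n$-dimensional space is \emph{not} $n$ in general: Auerbach's lemma (Lemma~\ref{lemma:minimal-clique} in the paper) gives $n$ only as a \emph{lower} bound. Concretely, in $(\RR^3,\|\cdot\|_\infty)$ the four lines spanned by $(1,1,1)$, $(1,1,-1)$, $(1,-1,1)$, $(1,-1,-1)$ are pairwise mutually BJ orthogonal (a supporting functional at a sign vector $\varepsilon$ is any $f=(\alpha_1\varepsilon_1,\alpha_2\varepsilon_2,\alpha_3\varepsilon_3)$ with $\alpha_i\ge0$, $\sum\alpha_i=1$, so $\varepsilon\perp y$ iff $0\in\mathrm{conv}\{\varepsilon_1y_1,\varepsilon_2y_2,\varepsilon_3y_3\}$, which holds for every pair of non-proportional sign vectors), yet four vectors in $\RR^3$ are linearly dependent; in $(\RR^n,\|\cdot\|_\infty)$ one gets a clique of size $2^{n-1}$. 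So your plan of deriving a contradiction from a linear relation $\sum c_lx_l=0$ cannot succeed. The argument that actually works (and is what \cite[Corollary 3.1]{Ara-Gut-Kuz-Raj-Zhi} does) is metric, not linear-algebraic: if $[x]\ne[y]$ are mutually orthogonal with unit representatives, then $\|x-\lambda y\|\ge1$ and $\|y-\mu x\|\ge1$ for all scalars, so the lines are $1$-separated in projective space, and compactness of the projective space of a finite-dimensional $\cX$ bounds the number of such lines.

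In the ($\Rightarrow$) direction your overall strategy (an infinite system of unit vectors $x_n$ and norm-one functionals $f_n$ with $f_n(x_m)=\delta_{nm}$) is the right one and is essentially what \cite[Theorem 3.5]{Ara-Gut-Kuz-Raj-Zhi} establishes, but the step you flag as ``the technical heart'' --- producing a unit $x_{n+1}\in V_n=\bigcap_{i\le n}\Ker f_i$ with $x_{n+1}\perp\Span\{x_1,\dots,x_n\}$ --- is precisely where the whole difficulty lives, and your sketched fix does not close it. Pulling the Auerbach vector of $M_n\oplus\FF z$ ``back into $V_n$'' along the decomposition $\cX=V_n\oplus M_n$ changes the vector by an element of $M_n$ \emph{and then rescales/re-chooses it}, which destroys the property $d(\,\cdot\,,M_n)=\|\cdot\|$; the best approximation $z-m_0$ genuinely leaves $V_n$ and no projection repairs this. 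Worse, the desired vector can fail to exist for a fixed admissible choice of the earlier functionals: already for $\dim\cX=2$ one can choose a unit vector $x_1$ at a nonsmooth point and a supporting functional $f_1$ such that no unit vector of $\Ker f_1$ is BJ orthogonal to $x_1$ (only a \emph{different} supporting functional at $x_1$ has a kernel containing a mutually orthogonal partner). So the induction must either choose the $f_i$ adaptively or be organized differently; as written, the existence claim is unproven and the proposed route to it would fail.
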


\begin{proof}
See our recent paper \cite[Corollary 3.1 and Theorem 3.5]{Ara-Gut-Kuz-Raj-Zhi}.
\end{proof}

\begin{lemma}\label{lem:dim}
Let $\cX$ be a normed space over $\FF$ and $n\ge 2$. Then the following statements are equivalent.
\begin{itemize}
	\item[(i)] For any $k\in\{1,\dots,n-1\}$ and any $k$-tuple of lines $(x_1,\dots,x_k)$ satisfying $x_i \perp x_j$, $1 \le i < j \le k$, one can always find $x_{k+1}\in\hat{\Gamma}(\cX)$ with $x_i \perp x_{k+1}$, $1 \le i \le k$.
	\item[(ii)] For any $(n-1)$-tuple of lines $(x_1,\dots,x_{n-1})$, one can always find $x_n\in\hat{\Gamma}(\cX)$ with $x_i \perp x_n$, $1 \le i \le n-1$.
	\item[(iii)] $\dim\cX\ge n$.
\end{itemize}
\end{lemma}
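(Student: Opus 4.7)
The plan is to establish the cycle (iii) $\Rightarrow$ (ii) $\Rightarrow$ (i) $\Rightarrow$ (iii). The first two implications are straightforward applications of James' criterion. For (iii) $\Rightarrow$ (ii), given lines $x_1,\dots,x_{n-1}$, pick any supporting functional $f_i$ at each $x_i$ and observe that the common kernel $\bigcap_{i=1}^{n-1}\ker f_i$ has codimension at most $n-1$, so it contains a nonzero vector $x_n$, which then satisfies $x_i\perp x_n$ for every $i$ by James. For (ii) $\Rightarrow$ (i), pad the given chain of length $k\leq n-1$ with $(n-1-k)$ arbitrarily chosen lines to form an $(n-1)$-tuple and apply (ii), extracting the desired $x_{k+1}$.

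For (i) $\Rightarrow$ (iii) I would argue by contrapositive. The infinite-dimensional case of (iii) is automatic, so assume $m:=\dim\cX$ is finite with $m<n$ and exhibit a chain of length $m\leq n-1$ that admits no extension, refuting (i). My strategy is induction on $m\geq 1$ with the following claim: in any normed space of dimension $m$ there exists a chain of length $m$ that cannot be extended. The base $m=1$ is immediate since any candidate extension is a scalar multiple of $y_1$ and $y_1\not\perp y_1$. For the inductive step with $m\geq 2$, pick a smooth point $x_1$, which exists since smooth points are dense on the unit sphere of any finite-dimensional normed space by Mazur's theorem. Let $f_1$ be its unique-up-to-scalar supporting functional, and set $\cZ:=\ker f_1$, a subspace of dimension $m-1$ with the restricted norm. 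The inductive hypothesis applied to $\cZ$ yields a chain $(y_1,\dots,y_{m-1})$ in $\cZ$ which cannot be extended within $\cZ$. I would then form $(x_1,y_1,\dots,y_{m-1})$ in $\cX$: each $y_i\in\ker f_1$ so $x_1\perp y_i$ via $f_1$, and for $1\leq i<j\leq m-1$ the relation $y_i\perp y_j$ in $\cZ$ is equivalent to the same relation in $\cX$ because the norms agree on $\cZ$.

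The main obstacle is proving that this new chain of length $m$ cannot be extended in $\cX$. Suppose for contradiction that $z\in\cX\setminus\{0\}$ satisfies $x_1\perp z$ and $y_i\perp z$ for $i=1,\dots,m-1$. Crucially, smoothness of $x_1$ ensures that every supporting functional at $x_1$ is a scalar multiple of $f_1$; combined with James' criterion for $x_1\perp z$, this forces $f_1(z)=0$, i.e. $z\in\cZ$. For each $i$, James provides a norm-one functional $g_i\in\cX^*$ with $g_i(y_i)=\|y_i\|$ and $g_i(z)=0$. The restriction $g_i|_\cZ$ satisfies $g_i|_\cZ(y_i)=\|y_i\|$ and has norm at most $1$, hence exactly $1$, so it is a supporting functional at $y_i$ in $\cZ$; since $z\in\cZ$, it still annihilates $z$. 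Therefore $z\in\cZ\setminus\{0\}$ is an extension of $(y_1,\dots,y_{m-1})$ in $\cZ$, contradicting the inductive hypothesis and closing the induction.
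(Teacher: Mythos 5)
Your proof is correct and takes essentially the same route as the paper's: (iii)$\Rightarrow$(ii) via the common kernel of supporting functionals, (ii)$\Rightarrow$(i) by padding, and $\neg$(iii)$\Rightarrow\neg$(i) by building a non-extendable chain from smooth points in nested kernel hyperplanes. Your induction on the dimension is just the paper's recursive construction packaged the other way around, with the same key steps (uniqueness of the supporting functional at a smooth point forcing $z\in\Ker f_1$, and the restriction/Hahn--Banach argument identifying orthogonality in the subspace with orthogonality in $\cX$).
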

\begin{proof}
	We will once again rely on the fact that $x\perp  y$ if and only if  $y\in\Ker f_x$, where $f_x$ is a
supporting functional at $x$.

	 (iii) $\Longrightarrow$ (ii). Choose an arbitrary $(n-1)$-tuple of lines $(x_1,\dots,x_{n-1})$. Let $f_i$ be any supporting functional at $x_i$, $i=1,\dots,n-1$. Denote $\cX_{n-1}=\bigcap_{i=1}^{n-1}\Ker f_i$.
    Then $\mathrm{codim}\,\cX_{n-1}\le n-1$ together with $\dim\cX\ge n$ imply that the subspace $\cX_{n-1}\subseteq \cX$  contains at least one line, $x_n$. Clearly, $x_i \perp x_n$ for $i = 1,\dots,n-1$.

	(ii) $\Longrightarrow$ (i) is straightforward.
	
	$\neg$ (iii) $\Longrightarrow$ $\neg$ (i). Assume $\dim\cX=k\le n-1$.  From \cite[Theorem 25.5, p.~246]{Rockafellar} and the fact that the norm is positively homogeneous and $x\mapsto \lambda x$ is a differentiable map  it follows that the norm's unit ball has a
	smooth point~$\hat{x}_1$. Let $f_1$ be a unique supporting functional at $\hat{x}_1$ and let
$x_1:=[\hat{x}_1]$
	be the corresponding line in $\hat{\Gamma}(\cX)$.  Then $x_1\perp  y$ for a line $y$ is equivalent to $y\subseteq\Ker f_1=\cX_1$.
		Now, the induced norm in  the $(k-1)$-dimensional space $\cX_1$ again has a smooth point, say $\hat{x}_2$;
	let $f_2\colon\cX_1\to\FF$ be the corresponding supporting functional which by the Hahn--Banach theorem we
extend to a supporting
	functional at a line $x_2:=[\hat{x}_2]$ on the original space $\cX_0:=\cX$.  Recursively,  one obtains  a $k$-tuple of lines
	$(x_1,\dots,x_{k})$, where $x_i\subseteq\cX_{i-1}\subseteq\cX_{i-2},$ $2\le
i\le k,$  is
	a line spanned by a smooth point $\hat{x}_i \in \cX_{i-1}:= \cX_{i-2}\cap \Ker f_{i-1}$ and has a unique supporting functional $f_i\colon
	\cX_{i-1}\to\FF$ which we again extend by the Hahn--Banach theorem to a supporting functional defined on~$\cX$. Clearly, $i<j$
	implies that  $x_j\subseteq\Ker f_i$, so $x_i\perp  x_j$ for $1\le i<j\le k$. 	
	Assume a line $x\subseteq\cX$ satisfies $x_i\perp  x$ for each $1\le i\le k$.  Since $x_1\perp  x$ and
	$\hat{x}_1\in\cX$ is smooth, then $x\subseteq\Ker f_1=\cX_1$.  Since $\hat{x}_2\in\cX_1$ is smooth and $x_2\perp  x$, we
	further have $x\subseteq\Ker f_2\cap \cX_1=\cX_2$. Proceeding recursively, one finds
	$x\subseteq\cX_k=0$, a contradiction.
\end{proof}

\begin{remark}\label{rem:dim}
Note that a normed
space $\cX$ has dimension $n$ if and only if the di-orthograph $\hat{\Gamma}(\cX)$ satisfies item
(i) with $k=1,\dots,n-1$
and does not satisfy item (i) for larger $k$, that is, there exist $n$-tuples of lines
$x_1,\dots,x_n\in\hat{\Gamma}(\cX)$ with $x_i\perp  x_j$ for $i<j$ but no further line
$x\in\hat{\Gamma}(\cX)$ satisfies $x_i\perp  x$ for $1\le i\le n$. Hence, the dimension of $\cX$ can be
calculated from the di-orthograph $\hat{\Gamma}(\cX)$ alone.
\end{remark}

Next we show that $\hat{\Gamma}(\cX)$ can detect the presence of nonsmooth points.
It is known that the norm on $\cX$ is smooth if and only if BJ orthogonality is right
additive in the sense that $x\perp  y_1$ and $x\perp  y_2$ always implies $x\perp (y_1+y_2)$ (see \cite[Theorem 4.2]{James}).
Below we provide another criterion for smoothness which does not rely on underlying additive structure.

\begin{lemma}\label{lem:smooth-norm}
	Let $\cX$ be a Banach space over $\FF$ with  $2\le\dim\cX=n<\infty$.  Then the norm in $\cX$ is nonsmooth if and only if
	there  exist $n-1$ lines $x_1,\dots,x_{n-1}\in\hat{\Gamma}(\cX)$ and two additional distinct lines
$y_n,y_n'$
	such that $x_i\perp  x_j$ for $1\le i<j\le n-1$,  and $x_i\perp  y_n$,  $x_i\perp  y_n'$ for $1\le i\le
	n-1$.
\end{lemma}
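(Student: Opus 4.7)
The plan is to apply James' criterion throughout: $\hat{x} \perp \hat{y}$ iff $\hat{y} \in \Ker h$ for some norm-one supporting functional $h$ at $\hat{x}$.

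$(\Longleftarrow)$ I argue by contraposition: assume the norm is smooth, so each line $x_i$ has a unique supporting functional $f_i$, and $x_i \perp y$ iff $y \subseteq \Ker f_i$. The relations $x_i \perp x_j$ for $i < j$ translate to $f_i(\hat{x}_j) = 0$, so the matrix $(f_i(\hat{x}_j))_{1 \le i, j \le n-1}$ is upper triangular with nonzero diagonal entries. Hence $f_1, \dots, f_{n-1}$ are linearly independent, $\bigcap_i \Ker f_i$ has dimension exactly $1$ in $\cX$, and it cannot contain two distinct lines $y_n \ne y_n'$.

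$(\Longrightarrow)$ Suppose the norm is nonsmooth and pick a nonsmooth line $x_1$ admitting two distinct norming functionals $f_1$ and $g_1$. Their kernels must differ, for otherwise proportionality together with $f_1(\hat{x}_1) = g_1(\hat{x}_1) = \|\hat{x}_1\|$ would force $f_1 = g_1$; consequently $\cY := \Ker f_1 \cap \Ker g_1$ has dimension $n-2$. Applying inside $\cY$ the iterative smooth-point construction from the proof of Lemma~\ref{lem:dim}, I obtain lines $x_2, \dots, x_{n-1}$ in successively smaller subspaces $\cY_{i-1} \subseteq \cY$, each with a unique supporting functional $f_i : \cY_{i-1} \to \FF$. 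I then Hahn--Banach extend each $f_i$ to a norm-one supporting functional $\tilde{f}_i$ at $\hat{x}_i$ on $\cX$. Since BJ orthogonality in a subspace passes to the ambient space, $x_i \perp x_j$ for $2 \le i < j \le n-1$; moreover $x_1 \perp x_j$ since $x_j \subseteq \cY \subseteq \Ker f_1$. I then declare $y_n$ to be a line in $\Ker f_1 \cap \bigcap_{i=2}^{n-1} \Ker \tilde{f}_i$ and $y_n'$ a line in $\Ker g_1 \cap \bigcap_{i=2}^{n-1} \Ker \tilde{f}_i$; both automatically satisfy $x_i \perp y_n$ and $x_i \perp y_n'$ for every $i$. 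In the degenerate case $n=2$ the chain is empty and $y_2, y_2'$ are simply the distinct one-dimensional subspaces $\Ker f_1, \Ker g_1$.

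The main obstacle is proving $y_n \ne y_n'$, which reduces to showing that the $n$ functionals $f_1, g_1, \tilde{f}_2, \dots, \tilde{f}_{n-1}$ are linearly independent on $\cX$, regardless of the Hahn--Banach choices. For this I plan to test a hypothetical relation at the vectors $\hat{x}_{n-1}, \hat{x}_{n-2}, \dots, \hat{x}_2$ in reverse order: because each $\hat{x}_j$ lies in $\cY \subseteq \Ker f_1 \cap \Ker g_1$ and the matrix $(\tilde{f}_i(\hat{x}_j))$ inherits the upper-triangular shape with nonzero diagonal from its restriction to $\cY_{i-1}$, the coefficients of $\tilde{f}_2, \dots, \tilde{f}_{n-1}$ must vanish in turn, and linear independence of $f_1, g_1$ then kills the last two. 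This independence forces each of the defining intersections to have dimension exactly $1$, uniquely determining $y_n$ and $y_n'$; and if they coincided, the common line would lie in $\Ker f_1 \cap \Ker g_1 \cap \bigcap_{i\ge 2} \Ker \tilde{f}_i = \{0\}$, which is impossible. Finally, $y_n, y_n' \ne x_i$ since no norming functional vanishes on its own line.
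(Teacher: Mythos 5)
Your proof is correct, but your construction for the nonsmooth direction is genuinely different from the paper's. The smooth direction is essentially the paper's argument (only note that the matrix $(f_i(\hat x_j))$ with zeros for $i<j$ is lower, not upper, triangular --- which changes nothing). For the nonsmooth direction the paper anchors the nonsmooth point at the \emph{end} of the chain, as $x_{n-1}$: it takes two linearly independent supporting functionals $f_{n-1},f_{n-1}'$ there, produces $y_n$ and then $y_n'\in\Span\{x_{n-1},y_n\}$ so that distinctness is immediate from $f_{n-1}'(y_n)\neq 0=f_{n-1}'(y_n')$, and builds $x_{n-2},\dots,x_1$ \emph{backward} by choosing at each step any norm-one functional vanishing on the already constructed vectors and letting $x_k$ be a point where it attains its norm; this needs neither smooth points nor any independence verification at the end. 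You instead anchor the nonsmooth point at $x_1$, run the forward smooth-point recursion of Lemma~\ref{lem:dim} inside $\Ker f_1\cap\Ker g_1$, and must then prove that the $n$ functionals $f_1,g_1,\tilde f_2,\dots,\tilde f_{n-1}$ are linearly independent in order to separate $y_n$ from $y_n'$ --- which you do correctly by testing a putative relation at $\hat x_{n-1},\dots,\hat x_2$ in turn and then invoking the independence of $f_1$ and $g_1$. The price of your route is this extra independence argument together with reliance on smooth points of the subspace norm (already used in Lemma~\ref{lem:dim}, so nothing new is imported); what it buys is that $y_n$ and $y_n'$ appear at the very end as explicit one-dimensional kernel intersections and the degenerate case $n=2$ is completely transparent.
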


\begin{proof}
	If $\cX$ has a smooth norm, then each  of the lines $x_i=[\hat{x}_i]$  has a unique supporting
		functional $f_i$. From
	$x_i\perp  x_j$, $i<j$, it follows, by induction, that $x_i\subseteq\bigcap_{k=1}^{i-1} \Ker f_k$. This implies
	that the functionals $f_k$ are linearly independent, for otherwise pick the smallest integer $i$ with
	$f_i=\sum_{k=1}^{i-1}\alpha_k f_k$ and evaluate at $\hat{x}_i$ to deduce $1=f_i(\hat{x}_i)=\sum_{k=1}^{i-1}\alpha_k
	f_k(\hat{x}_i)=\sum_{k=1}^{i-1}\alpha_k\cdot 0=0$, a contradiction. In particular, $\dim\bigcap_{k=1}^{n-1} \Ker f_k=1$, and there is only one line $y_n=\bigcap_{k=1}^{n-1} \Ker f_k$ which satisfies $x_i\perp y_n$  $(i=1,\dots,n-1)$.

Assume now that the norm has a nonsmooth point, say at a vector $x_{n-1},$ and let $f_{n-1},f_{n-1}'$ be
two
	linearly independent supporting functionals at $x_{n-1}$. Choose a vector $y_n$ such that $f_{n-1}(y_n)=0$
and
	$f_{n-1}'(y_n)\neq 0$ and choose a nonzero $y_n'\in\Span\{x_{n-1},y_n\}$ with $f_{n-1}'(y_n')= 0$. In
	particular,
	$$x_{n-1}\perp  y_n\quad\hbox{and}\quad x_{n-1}\perp  y_n'.$$ 	
	Recursively, choose
	norm-one functionals $f_{k}$ and unit vectors $x_{k}$ for $k=n-2,\dots,1$ such that: 	
	(i) $y_n,x_{n-1},\dots,x_{k+1}\in \Ker f_k,$ 	
	(ii) the norm of $f_k$ is attained at $x_k$. 	
	It follows that
	$f_k$ is a supporting functional at $x_k$ and that $x_i\perp  x_j$ and $x_i\perp  y_n$ for $1\le i<j\le
n-1$.
	Also, $y_n'\in\Span\{x_{n-1},y_n\}\subseteq \Ker f_i$ for $i=1,\ldots,n-2$  implies $x_i\perp  y_n'$ for
$1\le
i\le n-2$.
By a slight abuse of language we next identify a nonzero vector with a line spanned by it. Thus, the lines
$x_1,\dots, x_{n-1},y_n,y_n'\in\hat{\Gamma}(\cX)$ satisfy the claim.
\end{proof}

We proceed to classify rotund (i.e., strictly convex) norms. Given a vertex~$z$ in di-orthograph
$\hat{\Gamma}$, let $N_z:=\{v\in\hat{\Gamma};\;\;(z,v)\in E(\hat{\Gamma})\}$  denote its neighborhood; here $E(\hat{\Gamma})$ is the set of all directed edges of $\hat{\Gamma}$. Note that  $N_z\neq\varnothing$ if the underlying normed space is at least two-dimensional.

\begin{lemma} \label{lemma:not-strictly-convex}
 A normed space $\cX$  with $\dim\cX\ge 2$ over $\FF$ is strictly convex if and only if the function
$\hat{\Gamma}(\cX)\to 2^{\hat{\Gamma}(\cX)}$ which maps a vertex $z$ to its neighborhood $N_z$ is injective.
\end{lemma}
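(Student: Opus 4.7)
The plan is to use the geometric interpretation of BJ orthogonality to recover, from $N_z$ alone, the set of supporting functionals of the line $[z]$, and then to invoke the duality between strict convexity and uniqueness of the line on which a norm-one functional attains its norm.

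The key step, and the main obstacle, is the following auxiliary claim: for any nonzero bounded linear functional $f\in\cX^*$, the projective hyperplane $\PP(\Ker f)$ is contained in $N_z$ if and only if $f$ attains its norm on $[z]$. One direction is James' criterion: a unimodular normalization of such an $f$ is a supporting functional of $[z]$, whence $z\perp y$ for every $y\in\Ker f$. For the converse, $\PP(\Ker f)\subseteq N_z$ unfolds to $\|z-y\|\ge\|z\|$ for every $y\in\Ker f$, i.e., $\mathrm{dist}(z,\Ker f)=\|z\|$. The standard formula $\mathrm{dist}(z,\Ker f)=|f(z)|/\|f\|$ then gives $|f(z)|=\|f\|\cdot\|z\|$, which is precisely norm attainment on $[z]$.

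Granted this, suppose $\cX$ is strictly convex and $N_z=N_w$. Taking any supporting functional $f$ of $[z]$ (which exists by Hahn--Banach), we have $\PP(\Ker f)\subseteq N_z=N_w$, and the key step then shows that $f$ is also a supporting functional of $[w]$. Strict convexity means that each norm-one functional attains its norm on at most one line, forcing $[z]=[w]$.

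For the converse, assume $\cX$ is not strictly convex. Then there exist unit vectors $\hat x_1,\hat x_2$ on distinct lines with $\|\hat x_1+\hat x_2\|=2$, and a supporting functional at $\hat x_1+\hat x_2$ is simultaneously a supporting functional at $[\hat x_1]$ and $[\hat x_2]$, forcing the whole segment $\hat z_t:=(1-t)\hat x_1+t\hat x_2$, $t\in[0,1]$, to lie on the unit sphere. A standard convex-combination argument then shows that for $t\in(0,1)$, a norm-one functional $g$ satisfies $g(\hat z_t)=1$ if and only if $g(\hat x_1)=g(\hat x_2)=1$, so the set of supporting functionals at $\hat z_t$ is independent of $t\in(0,1)$. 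Consequently $N_{\hat z_{t}}$ is constant on this interval, while the lines $[\hat z_t]$ are pairwise distinct since $\hat x_1,\hat x_2$ are linearly independent, furnishing the required failure of injectivity.
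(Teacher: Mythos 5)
Your proof is correct. The ``not strictly convex $\Rightarrow$ not injective'' half is essentially the paper's argument: the paper takes two points in the relative interior of a flat segment of the sphere and shows that any supporting functional at one of them also supports the other, which is exactly what your extreme-point computation $g(\hat z_t)=1\Leftrightarrow g(\hat x_1)=g(\hat x_2)=1$ delivers. Where you genuinely diverge is the injectivity half. The paper restricts to the two-dimensional subspace $\Span\{z_1,z_2\}$, picks a single $x\in\Ker f$ with $x\in N_{z_1}$, and uses the fact that two norm-one functionals on a two-dimensional space whose kernels share a line are unimodular multiples of one another, then exhibits the flat segment $[z_1,z_2]$ directly. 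You instead argue globally: your key claim, via the hyperplane distance formula $\mathrm{dist}(z,\Ker f)=|f(z)|/\|f\|$, shows that $N_z$ determines exactly which functionals attain their norm on the line $[z]$, and you then invoke the dual characterization of strict convexity (each norm-one functional attains its norm on at most one line). This is a clean and in fact slightly stronger route: your key claim recovers, as a byproduct, the paper's concluding Example in Section 4 (that $N_x=N_y$ forces $x$ and $y$ to share all supporting functionals), which the paper proves separately by the same two-dimensional reduction. The trade-off is that the paper's argument is entirely elementary and self-contained, whereas yours leans on the distance formula and on the norm-attainment characterization of rotundity; both are standard, so nothing is lost, and your version is arguably more reusable.
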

\begin{proof}
Suppose $\cX$ is not strictly convex and let ${\bf L}:=\{\lambda x+(1-\lambda)y;\;\;0\le \lambda\le 1\}$ be a nondegenerate line on the norm's unit
sphere~$S$. Consider distinct unit vectors $z_1,z_2$ from the relative interior of ${\bf L}\subseteq S$.
Then for some $\varepsilon>0$ we have
\begin{equation*}
\|(1-\lambda) z_1+\lambda z_2\| =1;\qquad \lambda\in(-\varepsilon,1+\varepsilon).
\end{equation*}
Let now $f$ be an arbitrary supporting functional at $z_1$, that is, ${f(z_1) = \| f \| = 1}$.
Then
$$|1 + \lambda (f(z_2)-1)|=|f((1-\lambda) z_1+\lambda z_2)| \leq \|
f \| \cdot \| z_1 + \lambda (z_2 - z_1) \| = 1$$ for any $\lambda
\in (-\varepsilon, \varepsilon)$, so $f(z_2)=1$.
Hence $f$ is a supporting functional for both $z_1$ and $z_2$, and thus the sets of supporting functionals at $z_1$
and at $z_2$ coincide.  This implies that the set $\{x\in\cX;\;\;z_1\perp  x\}$ coincides with the set
$\{w\in\cX;\;\;z_2\perp  w\}$ because, by~\cite{James}, $z \perp  x$ if and only if $x$ is annihilated by some
supporting functional at $z$. As such, also $N_{z_1}=N_{z_2}$.

To prove the converse we assume $\cX$ is strictly convex. We need to show that for any two distinct vertices
$z_1, z_2 \in \hat{\Gamma}(\cX)$ we have $N_{z_1} \neq N_{z_2}$.

Note first that $x \perp  y$ in $\cX$ if and only if $x \perp  y$ in $\Span \{ x, y \} \subseteq \cX$, since,
by the definition, BJ orthogonality of two vectors depends only on the norms of their linear combinations. We
now pass to the subspace $\cY = \Span \{z_1, z_2\}$. Clearly, the norm on $\cY$ is also strictly convex. It is
sufficient to show that $N_{z_1} \neq N_{z_2}$ in~$\hat{\Gamma}(\cY)$.

Consider some supporting functional $f$ at $z_1$ in $\cY$, and let $x \in \Ker f$. Hence $z_1 \perp  x$, so $x
\in N_{z_1}$. Assume, to reach a contradiction, that $x \in N_{z_2}$. Then there exists a supporting functional
$g$ at $z_2$ in $\cY$ such that $x \in \Ker g$.
Since $\dim \cY = 2$ and $\Ker f \cap \Ker g$ is nontrivial, it follows that $\Ker f = \Ker g$ which implies that $g$ is a unimodular scalar multiple of $f$.
By transferring this  scalar multiple to $z_2$ we may assume that $f(z_1) =
f(z_2) = \|f\| = \|z_1\| = \|z_2\| = 1$, so it follows that
$$
1 = f(\lambda z_1 + (1 - \lambda) z_2) \le \|f\| \cdot \|\lambda z_1 + (1 - \lambda) z_2\| \le 1
$$
for all $\lambda \in [0,1]$. Thus the line segment $[z_1,z_2]$ consists of unit vectors, a contradiction to
the strict convexity. Hence $x \notin N_{z_2}$ and $N_{z_1} \neq N_{z_2}$.
\end{proof}

\section{Main results -- Isomorphism problem}

How much information on the norm is encoded in the di-orthograph $\hat{\Gamma}(\cX)$? First we note that, if
$\cX$ and $\cY$ are isometrically isomorphic Banach spaces, then $\hat{\Gamma}(\cX)$ and $\hat{\Gamma}(\cY)$
are isomorphic. More precisely, if $A\colon\cX\to \cY$ is a linear bijective isometry then a line $x$ is BJ
orthogonal to a line $y$, relative to the norm on $\cX$, if and only if $Ax$ is BJ orthogonal to $Ay$ relative to the norm on
$\cY.$ That is, $x\mapsto Ax$ induces an isomorphism of di-orthographs $\hat{\Gamma}(\cX)$ and $\hat{\Gamma}(\cY)$. Our main result (see Theorem~\ref{thm:iso} below) will establish  the converse of this fact.

Let us start with some preliminary definitions and results.  Recall that, according to~\cite[part 3, ch. 1, \S 2]{Beauzamy}, in a real smooth normed space ${(\RR^n,\|\cdot\|)}$  the unique
supporting functional $f_x$ at a unit vector $x=(x_1,\dots,x_n)\in\RR^n$ is given by the norm's Gateaux directional derivative at $x$. In fact, it equals
\begin{equation}\label{eq:f_x=grad}
f_x=\langle \cdot, \partial\|x\|\rangle
\end{equation}
 where $\langle x,y\rangle:=xy^t$ is the standard scalar product of row vectors in $\RR^n$ and $\partial\|x\|:=\left(\frac{\partial\|\cdot\|}{\partial x_1},\dots,\frac{\partial\|\cdot\|}{\partial x_n}\right)(x)$ is the   norm's differential evaluated at vector $x$.

 A complex normed space $(\CC^n,\|\cdot\|)$  can be regarded as a real normed space $(\RR^{2n}, \| \cdot \|)$ by restricting the scalars.  We write $x=(x_1,\dots,x_n)\in\CC^n$ and consider the real and complex components of $x_k=\re x_k+i\,\im x_k$ as independent real variables. The $\RR$-linear functional $f_x$ defined above which depends on $2n$ real variables $(\mathop{\re }x_1, \mathop{\im } x_1, \dots, \mathop{\re }x_n, \mathop{\im } x_n)$ is the real part of a $\CC$-linear functional given by $\tilde{f}_x\colon z\mapsto f_x(z)-if_x(iz)$. Its  norm coincides with $\|f_x\|=1$, and therefore it must satisfy   $\tilde{f}_x(x)=1$, so it is a $\CC$-linear supporting functional at a unit vector $x$. It follows easily that $\tilde{f}_x$ is given by
\begin{equation*}
 \tilde{f}_x\colon z\mapsto   \left\langle z,\left( \frac{\partial{\|x\|}}{\partial\re  x_1}+i\frac{\partial{\|x\|}}{\partial\im
  x_1},\dots, \frac{\partial{\|x\|}}{\partial\mathop{\re }x_n}+i\frac{\partial{\|x\|}}{\partial\im
  x_n}\right)\right\rangle,
\end{equation*}
where $\langle x,y\rangle:=xy^\ast$ is the standard scalar product of row vectors in $\CC^n$. This can be simplified if one introduces complex partial  derivatives
$$\frac{\partial\|x\|}{\partial x_k}:=  \frac{\partial{\|x\|}}{\partial\mathop{\re }
  x_k} - i\frac{\partial{\|x\|}}{\partial\im
  x_k}\quad \hbox{ and }\quad \frac{\partial\|x\|}{\partial \overline{x_k}}:=  \frac{\partial{\|x\|}}{\partial\mathop{\re }
  x_k} + i\frac{\partial{\|x\|}}{\partial\im
  x_k};$$
then we can identify the $\CC$-linear supporting functional  $\tilde{f}_x$  with the norm's complex
conjugate  differential
\begin{align*}
\partial_{\CC}\|x\|:&=\left(\frac{\partial \|x\|}{\partial
\overline{x_1}},\dots,\frac{\partial \|x\|}{\partial
\overline{x_n}}\right)\\
&=\left( \frac{\partial{\|x\|}}{\partial\re  x_1}+i\frac{\partial{\|x\|}}{\partial\im
  x_1},\dots, \frac{\partial{\|x\|}}{\partial\re  x_n}+i\frac{\partial{\|x\|}}{\partial\im
  x_n}\right).
\end{align*}

The words \emph{a curve} and \emph{a path} may have several different meanings, depending on the context and the literature. Throughout the present paper, \emph{a curve} denotes a  subset in $\CC$, which is the image of \emph{a path}, i.e., the image of a continuous map ${\bf r}\colon [a,b]\to\CC$ where $[a,b]\subseteq\RR$ is an interval with at least two different points.  We say that a function $f\colon\CC\to\CC$ is bounded from above on a subset $\Gamma\subseteq\CC$ if $\sup_{z\in\Gamma} |f(z)|<\infty$.

\begin{lemma}\label{lem:sigma-bounded-on-curve}
Suppose a nonzero ring homomorphism $\sigma\colon\CC\to\CC$ is bounded from above on a curve $\Gamma\subseteq\CC$ with more than one point. Then $\sigma$ is continuous, hence either identity or a complex conjugation.
\end{lemma}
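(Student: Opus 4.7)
The strategy is to prove continuity of $\sigma$, whence the conclusion follows: for a continuous nonzero ring homomorphism $\sigma\colon\CC\to\CC$, any $r\in\RR$ is a limit of rationals $q_n\to r$, so $\sigma(r)=\lim\sigma(q_n)=\lim q_n=r$, giving $\sigma|_{\RR}=\mathrm{id}_\RR$; combined with $\sigma(i)^2=-1$ this leaves only $\sigma=\mathrm{id}$ or $\sigma=(z\mapsto\bar z)$. Since $\sigma(a+bi)=\sigma(a)\pm i\sigma(b)$, continuity of $\sigma$ is equivalent to continuity of $\sigma|_{\RR}$, and in turn follows from $\sigma|_{\RR}$ being bounded on \emph{some} nondegenerate real interval $[\alpha,\beta]$: subtraction gives $|\sigma(r)|\le 2C$ on $[-\delta,\delta]$ with $\delta=\beta-\alpha$, and $\QQ$-linearity $\sigma(r/n)=\sigma(r)/n$ upgrades this to $|\sigma(r)|\le 2C/n$ on $[-\delta/n,\delta/n]$, giving continuity at $0$ and hence everywhere.

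It therefore suffices to produce such an interval. Choosing distinct $z_0,z_1\in\Gamma$ (possible as $|\Gamma|\ge 2$), the transformed curve $(\Gamma-z_0)/(z_1-z_0)$ contains $0$ and $1$, and $\sigma$ remains bounded on it (the divisor $\sigma(z_1-z_0)$ is nonzero by injectivity of $\sigma$). Since $\Gamma$ is a Peano continuum, Hahn--Mazurkiewicz produces a simple arc $\gamma\colon[0,1]\to\Gamma$ with $\gamma(0)=0$ and $\gamma(1)=1$. If $\gamma([0,1])\subseteq\RR$, then the intermediate value theorem (IVT) gives $[0,1]\subseteq\re\gamma([0,1])$ and $\sigma|_{\RR}$ is already bounded on $[0,1]$, finishing the proof.

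The remaining case is $\gamma([0,1])\not\subseteq\RR$. Then $\im\gamma\colon[0,1]\to\RR$ is continuous, vanishes at both endpoints, and is not identically zero; choose a connected component $(s_1,s_2)$ of $\{t\colon\im\gamma(t)>0\}$ (WLOG nonempty; otherwise use $\{t\colon\im\gamma(t)<0\}$). Continuity of $\im\gamma$ forces $\im\gamma(s_j)=0$, so $\gamma(s_j)\in\RR$, and $\gamma(s_1)\ne\gamma(s_2)$ by injectivity of the arc. Let $c^*=\max_{t\in[s_1,s_2]}\im\gamma(t)>0$, attained at some $t^*\in(s_1,s_2)$. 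For each $c\in(0,c^*)$, IVT applied separately on $[s_1,t^*]$ and on $[t^*,s_2]$ supplies $t_1(c)\in[s_1,t^*]$ and $t_2(c)\in[t^*,s_2]$ with $\im\gamma(t_j(c))=c$; taking $t_1(c)$ as the smallest and $t_2(c)$ as the largest such crossings, the difference $\delta(c):=\re\gamma(t_1(c))-\re\gamma(t_2(c))$ is a nonzero real number satisfying $|\sigma(\delta(c))|\le 2M'$ (where $M'$ is the normalized bound on $\sigma$). A boundary analysis shows $t_1(c)\to s_1,\,t_2(c)\to s_2$ as $c\to 0^+$ (since $\im\gamma$ attains every small positive level arbitrarily close to each endpoint of the component), so $\delta(c)\to\gamma(s_1)-\gamma(s_2)\ne 0$; and $t_1(c),t_2(c)\to t^*$ as $c\to c^{*-}$, so $\delta(c)\to 0$. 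Continuity of $\delta$ then forces its image to contain a nondegenerate real interval $J$ on which $|\sigma|\le 2M'$, and invoking the initial reduction completes the proof.

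\textbf{Main obstacle.} The delicate point is the continuity of $\delta$, i.e.\ of the selections $t_1,t_2$. It can fail exactly when $\im\gamma$ is constant on a subinterval at some value $v$. In that plateau subcase, however, $\Gamma$ contains a horizontal subsegment $\{\re\gamma(t)+iv\colon t\in\text{plateau}\}$, and the bound $|\sigma(\re\gamma(t)+iv)|\le M'$ combined with $|\sigma(iv)|=|\sigma(v)|$ (using $\sigma(i)\in\{\pm i\}$) yields $|\sigma(\re\gamma(t))|\le M'+|\sigma(v)|$ on the nondegenerate real interval $\re\gamma(\text{plateau})$, producing the required $J$ directly. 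Absent plateaus, the first/last crossing selections are continuous and monotone, and the main argument runs without modification.
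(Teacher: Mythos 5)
Your strategy is genuinely different from the paper's: you reduce everything to finding a nondegenerate real interval on which $\sigma|_{\RR}$ is bounded, and you try to extract it by pairing two points of the arc at the same height, whereas the paper assembles translates and rotations of $\Gamma$ into a closed curve, invokes the Jordan curve theorem to show it separates the plane, and then proves the difference set has nonempty interior by a connectedness argument in the style of Simon--Taylor. The first half of your write-up is correct: the $\QQ$-linearity bootstrap from boundedness on an interval to continuity, the affine normalization, the extraction of a simple arc, the real-arc case, the choice of the component $(s_1,s_2)$, the facts that $\delta(c)=\gamma(t_1(c))-\gamma(t_2(c))$ is real, nonzero and satisfies $|\sigma(\delta(c))|\le 2M'$, and the limit $\delta(c)\to\gamma(s_1)-\gamma(s_2)\ne 0$ as $c\to 0^+$.

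The gap is the claim that $t_1(c),t_2(c)\to t^*$ as $c\to c^{*-}$, hence $\delta(c)\to 0$. This holds only when $\im\gamma$ attains its maximum on $[s_1,s_2]$ at the single point $t^*$. If the maximum is attained at two or more points (which can happen with no plateau whatsoever), then $t_1(c)$ and $t_2(c)$ converge to the first, respectively last, maximizer, and $\delta(c)$ tends to a nonzero limit; worse, $\delta$ can be \emph{constant}, so no interval $J$ is produced. Concretely, take the arc consisting of a ``left wall'' $\{(\phi(y),y):y\in[0,1]\}$ rising from $0$, a ``top'' joining $(\phi(1),1)$ to $(\phi(1)+1,1)$ that dips strictly below height $1$ in between, and a ``right wall'' $\{(\phi(y)+1,y):y\in[0,1]\}$ descending to $1$, with $\phi(0)=0$ and $\phi$ small and nowhere affine. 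This arc is simple, contains no line segment, and $\im\gamma$ has no plateaus, yet $t_1(c)$ always lands on the left wall and $t_2(c)$ on the right wall, so $\delta(c)\equiv -1$. (The interval you need is carried by pairs of same-height points on the dipping top, which the first-crossing/last-crossing selection never sees; recursing into the top does not obviously terminate, since the top can again have the two-walls-and-a-top shape.) A secondary defect: your claim that $\delta$ can only be discontinuous at plateaus is also false --- $t_1$ jumps at every level that is a local maximum value of $\im\gamma$ on $[s_1,t^*]$ exceeded later --- though that alone would be repairable by working between consecutive jumps. The multiple-maximizer phenomenon is the real obstruction, and it is essentially why the paper resorts to the global separation argument instead of a local two-crossing selection.
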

\begin{proof}
We will  rely  freely on the  fact that if an additive and multiplicative map $\sigma\colon \CC\to\CC$ is bounded from above on a set $\Gamma$, then it remains bounded from above on a  set $\Gamma_1$  obtained from $\Gamma$ by  applying any of the   following four procedures finitely many times:
(i) taking a  subset, i.e., $\Gamma_1\subseteq\Gamma$, (ii) translating by $\mu\in\CC$, i.e.,  $\Gamma_1=\mu+\Gamma$,  (iii) rotating/expanding, i.e., $\Gamma_1=\mu \Gamma$, and (iv) taking the union of finitely many  sets obtained in previous iterations.

Let ${\bf r}\colon[0,1]\to \Gamma$ be the corresponding non-constant path. By restricting and parameterizing  the domain  we can assume that ${\bf r}(0)\neq {\bf r}(1)$. By translating and rotating/expanding the resulting curve we can further assume that ${\bf r}(0)=0$ and ${\bf r}(1)=1$.

 If the curve $\Gamma$ contains a line segment then we can further translate and rotate/expand it to assume that this line segment equals the interval $[0,1]$. Then $\sigma$ is bounded on the closed unit disk because if $a,b\in[0,1]$, then $\sigma(a+ib) =\sigma(a) + \sigma(i)\sigma(b)$. As such, $\sigma$ is continuous  (see, e.g.,  \cite[Corollary~5, p.~15]{Acz-Dho}) and the result follows.

  Assume $\Gamma$ contains no line segment. Being compact, it is contained in the smallest solid rectangle $Q$ whose sides are parallel to real/imaginary axis. Now, the union $\Gamma\cup (1+\Gamma)$ is still a curve because the end point of a path ${\bf r}$ in $\Gamma$ coincides with the initial point of a translated path $t\mapsto 1+{\bf r}(t)$ of $(1+\Gamma)$. By induction we see that the finite union
  $\bigcup_{k=0}^{n-1}(k+\Gamma)$ which, by the abuse of notation we again denote by $\Gamma$, is still a curve which connects $0$ to $n$; here an integer $n$ is chosen to be  bigger than four-times the height of a rectangle~$Q$. By another abuse of notation we denote  the union of translated solid rectangles $\bigcup_{k=0}^{n-1} (k+Q)$ again by $Q$.
  We now rotate $\Gamma$ by~$\pi/2$, i.e., we consider $i\Gamma$, and translate $\Gamma$ as well as its rotated version. The resulting union (with negative sign in the exponent denoting that the parametrization of $\Gamma$ must be reversed),
  $$\hat{\Gamma}:=\Gamma^-\cup (i\Gamma)\cup (ni+\Gamma)\cup (n+i\Gamma^-)$$
  is a closed curve, which connects in clock-wise direction $0$ to $ni$, to $n(i+1)$ to $n$ and back to $0$. Also, this closed curve is contained in the union of four solid rectangles $$\hat{Q}:=Q\cup (i Q)\cup (ni+Q)\cup(n+iQ)$$
  each obtained by rotating $Q$ by $\pi/2$ and/or translating it. Since $n$ is at least  four times the height of $Q$, we see that the above union properly encircles, but does not contain a point $\alpha:=\frac{n}{2}(1+i)$.

\begin{figure}[H]
  \floatbox[{\capbeside\thisfloatsetup{capbesideposition={right,bottom},capbesidewidth=7cm}}]{figure}[\FBwidth]
{\caption{\label{figure:Q}The original curve $\Gamma$ (with dashed lines) and its translates/rotations (with dotted lines) form a closed curve $\hat{\Gamma}$ sitting inside the union $\hat{Q}$ of four solid rectangles  (with border lines drawn). }}
{\includegraphics[width=4cm]{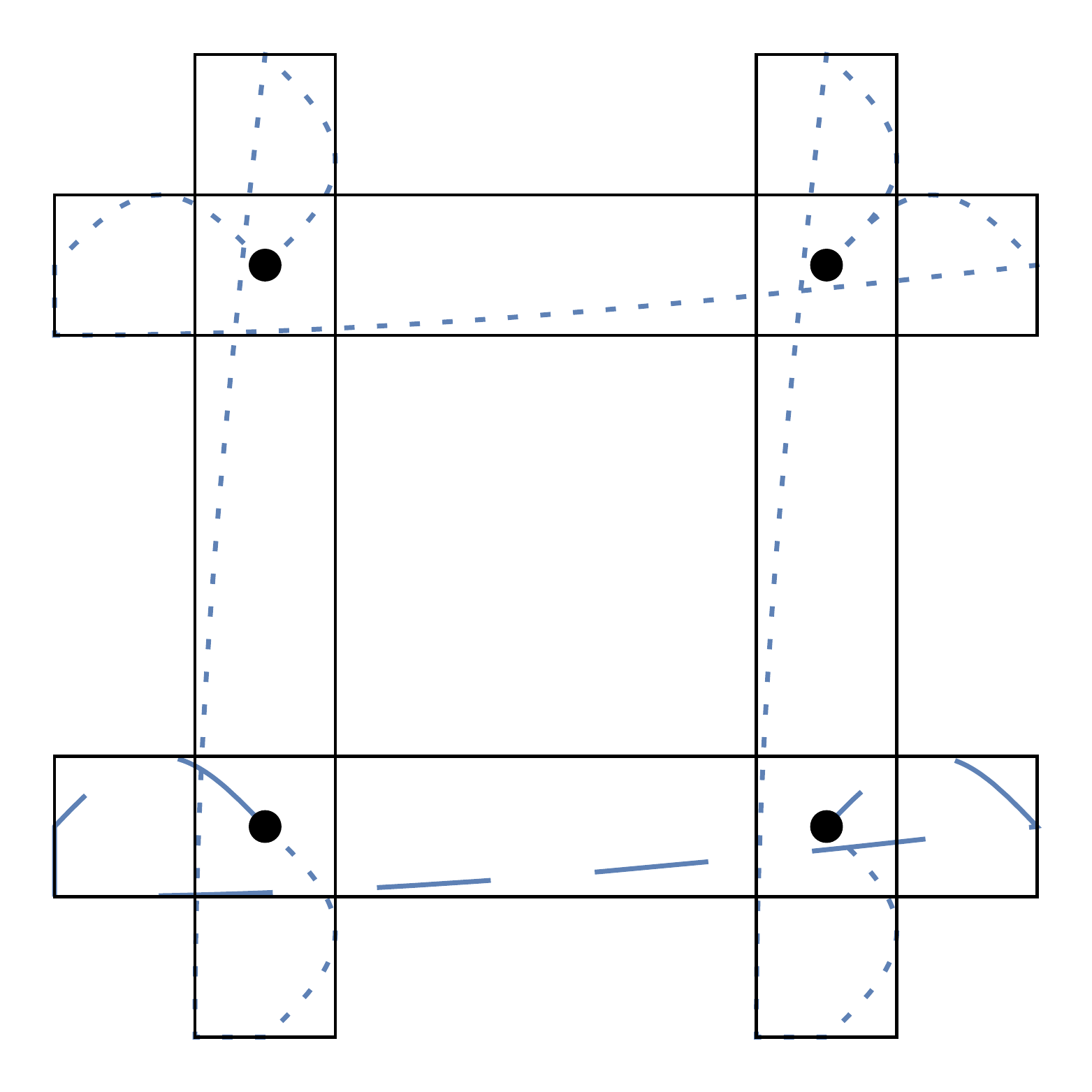}}
\end{figure}

  Notice that  a clock-wise oriented rectangle $\hat{R}:=[0,n]\cup i[0,n]\cup(ni+[0,n])\cup(n+i[0,n])\subseteq\hat{Q}$, i.e., the closed polygonal curve  which connects  $0 \rightarrow
  (in)\rightarrow n(1+i)\rightarrow n\rightarrow 0$ is a deformation of $\hat{Q}$ within $\hat{Q}$ (that is, the identity map on $\hat{Q}$ is homotopic, within $\hat{Q}$,  to a map from $\hat{Q}$ onto $\hat{R}$). This deformation then also induces a deformation of $\hat{\Gamma}$ onto $\hat{R}$ within $\hat{Q}$ which never passes through $\alpha:=\frac{n}{2}(1+i)$ nor through $\infty$. Since $\hat{R}$ separates $\CC$ by the Jordan Curve Theorem, the same holds for $\hat{\Gamma}$ by \cite[XVII, 4.3]{Dugundji}, and in fact $\alpha$ is in a bounded component of $\CC\setminus\hat{\Gamma}$.

Since $\sigma$ is additive, it is clearly  bounded from above also on the subset $\hat{\Gamma}-\hat{\Gamma}\subseteq\CC$. We claim that this subset has a nonempty interior, so it must also contain a line segment, and hence, as shown above, $\sigma$ is continuous.   To see that $\hat{\Gamma}-\hat{\Gamma}\subseteq\CC$ contains interior points, we borrow the idea from a paper by Simon and Taylor \cite{Simon-Taylor}.

Let $C_\infty$ be the unbounded component of $\CC\setminus\hat{\Gamma}$ and let $C_0:=\CC\setminus (C_\infty\cup\hat{\Gamma})$ be the union of its bounded components. Notice that $C_\infty$ is an open set whose boundary is contained in $\hat{\Gamma}$ (see, e.g., \cite[XVII, 1.2]{Dugundji}),
so that $(C_\infty\cup\hat{\Gamma})=\overline{C_\infty}\cup\hat{\Gamma}$ is closed and as such its complement, $C_0$ is open.

Let $$\Omega:=\{x\in\CC;\;\;\exists \,a_1,a_2\in\hat{\Gamma}: a_1\in x+C_0\,\hbox{ and }\,a_2\in x+C_\infty\}.$$
This is clearly an open subset. It is also nonempty: in fact, let $\alpha\in C_0$ be a point in the bounded component. Consider the two horizontal rays emanating from $\alpha$, that is,   $L^{-}:=(-\infty,0]+\alpha $ and $L^{+}:=\alpha+[0,\infty) $. Since $\hat{\Gamma}$ separates $\CC$, both rays must intersect it. Let $a_1\in\hat{\Gamma}\cap L^{-}$ be the closest point to $\alpha$ on  the left ray which also belongs to a compact closed curve $\hat{\Gamma}$ and let $a_2\in\hat{\Gamma}\cap L^+$ be the furthest point from $\alpha$ on the right ray   which still belongs to $\hat{\Gamma}$. By moving $\hat{\Gamma}$ slightly to the left, i.e., by adding small enough real $x<0$ to $\hat{\Gamma}$   we achieve that $a_1\in\hat{\Gamma}$ belongs to the bounded component of $\CC\setminus(x  +\hat{\Gamma})$, that is, $a_1\in x+C_0$ while $a_2\in \hat{\Gamma}$ belongs to its unbounded component, that is, $a_2\in x+C_\infty$. Indeed, $\Omega $ is nonempty.

Assume there is $x\in\Omega$ such that $x\notin\hat{\Gamma}-\hat{\Gamma}$. Then $(x+\hat{\Gamma})\cap\hat{\Gamma} = \varnothing$. It follows that a connected set $-x+\hat{\Gamma}$ is completely contained in the union of disjoint open subsets $C_\infty\cup C_0$ (namely, $-x+\hat{\Gamma}\subseteq\CC = C_0\sqcup \hat{\Gamma}\sqcup C_\infty$, the disjoint union), so $\hat{\Gamma}\subseteq(x+C_0)\sqcup (x+C_\infty)$. Also, by definition of $\Omega$, both open sets $(x+C_0)$ and $(x+C_\infty)$ contain a portion of $\hat{\Gamma}$, so they partition a connected set $\hat{\Gamma}$, a contradiction. Indeed, $\Omega$ is an open subset contained in $\hat{\Gamma}-\hat{\Gamma}$.
\end{proof}
\begin{remark}
The  proof of the lemma relies on the fact that if a closed curve $\Gamma\subseteq\CC$ separates the complex plane, then $\Gamma-\Gamma$ has nonempty interior.
With the help of dimension theory for separable metric spaces (see a book by Hurewitz and  Wallman~\cite{Hure-Wall}) much more can be said. In fact, Shchepin~\cite{Scepin} showed the following. If $\Gamma_1,\dots,\Gamma_n$ are $n\ge 1$ compact connected subsets in $\RR^n$ such that (i) $0\in\bigcap_{i=1}^n \Gamma_i$ and  (ii) there exist $n$ linearly independent points $a_i\in\Gamma_i$, then the sum $\sum\Gamma_i:=\{x_1+\dots+x_n;\;\; x_i\in\Gamma_i\}\subseteq\RR^n$ is of dimension at least $n$ and therefore it contains an open ball (see \cite[Theorem IV.3]{Hure-Wall}).

Note that Shchepin's result no longer holds if we drop the condition that $\Gamma_1,\cdots,\Gamma_n$ contain linearly independent points.  Say, if $\Gamma_1=\Gamma_2=[0,1]$ (this can be made into  a closed curve by going from $0$ to $1$ and then back), then $\Gamma_1+\Gamma_2=[0,2]\subseteq\CC$ is of dimension $1$.
\end{remark}

Our  next lemma is known in the case of real spaces, see Taylor~\cite[Theorem~2]{Taylor}. Since we will use its variant for complex, two-dimensional spaces we present the short proof,
valid for real and complex normed spaces, for the sake of convenience. We remark that Taylor used a similar
approach (see \cite[Theorem~1]{Taylor}).

\begin{lemma} \label{lemma:minimal-clique}
Let $n\ge2$ and let ${\mathscr X}$ be an $n$-dimensional normed space over~${\mathbb F}$. Then there exist
at least~$n$ unit vectors which are pairwise BJ orthogonal.
\end{lemma}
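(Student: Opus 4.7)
The plan is to adapt the classical determinant-maximisation argument of Auerbach. After fixing an arbitrary algebraic basis of $\cX$, identify $\cX$ with $\FF^n$ and let $M(y_1,\dots,y_n)$ be the $n \times n$ matrix whose columns are the coordinate vectors of $y_1,\dots,y_n$. The function
\[
(y_1,\dots,y_n)\;\longmapsto\;|\det M(y_1,\dots,y_n)|
\]
is continuous on the compact product $S\times\cdots\times S$ of $n$ copies of the unit sphere $S\subseteq\cX$, so it attains a maximum at some tuple $(x_1,\dots,x_n)$. The maximum is strictly positive because $\cX$ admits a basis, which can be rescaled to lie on $S$.

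Next, for each $i \in \{1,\dots,n\}$, introduce the $\FF$-linear functional
\[
f_i(x)\;:=\;\frac{\det M(x_1,\dots,x_{i-1},\,x,\,x_{i+1},\dots,x_n)}{\det M(x_1,\dots,x_n)}.
\]
By multilinearity, $f_i(x_j)=\delta_{ij}$. Given any unit vector $x\in S$, pick a unimodular scalar $\mu\in\FF$ with $\mu f_i(x)=|f_i(x)|$; then $\mu x\in S$ and the maximality of the determinant yields $|f_i(x)|=f_i(\mu x)\le 1$. Hence $\|f_i\|\le 1$, and since $f_i(x_i)=1=\|x_i\|$, the functional $f_i$ is in fact a supporting functional at $x_i$ with $\|f_i\|=1$.

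Finally, for every $i\ne j$ we have $f_i(x_j)=0$, and James' criterion (recalled earlier in the paper) immediately gives $x_i\perp x_j$. Thus the unit vectors $x_1,\dots,x_n$ are pairwise BJ orthogonal, as required. The argument is essentially self-contained; the only mild technicality is the unimodular rescaling in the complex case, which is needed to pass from the maximality of $|\det M|$ on $S^{\times n}$ to the norm bound $\|f_i\|\le 1$, and this is the single place where one must be a little careful.
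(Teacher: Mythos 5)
Your proof is correct and rests on the same key idea as the paper's: maximizing $|\det(y_1,\dots,y_n)|$ over the $n$-fold product of the unit sphere. The only difference is in how orthogonality is extracted from maximality --- the paper perturbs one maximizer to $y_k'=\frac{y_k+\lambda y_j}{\|y_k+\lambda y_j\|}$ and derives a contradiction directly from the definition of BJ orthogonality, whereas you build the Auerbach dual functionals $f_i$, verify $\|f_i\|=1=f_i(x_i)$ and $f_i(x_j)=\delta_{ij}$, and invoke James' supporting-functional criterion; both routes are sound, and yours has the minor bonus of exhibiting the supporting functionals explicitly.
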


\begin{proof} We may assume that ${\mathscr X}={\mathbb F}^n$.
Let ${\bf S}\subseteq{\mathbb F}^n$ denote the norm's unit sphere. The  continuous real-valued function
\begin{equation*}
f\colon(x_1,\dots,x_n)\mapsto |\det(x_1, \dots, x_n)|
\end{equation*}
attains its maximum on a compact set $\prod_1^n {\bf S}$, say at a point with coordinates
 $y_1, \dots, y_n \in {\bf S}$.  Clearly, $f(y_1,\dots,y_n)>0$ because ${\bf S}$ contains $n$ linearly independent vectors.  We now show that $y_1, \dots, y_n$ are pairwise
mutually BJ orthogonal.

Assume the contrary. Let $k \neq j$ be such that $y_k \not\perp y_j$, that is, $$\| y_k + \lambda y_j \| <
\| y_k \|=1$$ for some $\lambda \in{\mathbb F}$. By their linear independence  we have $ y_k + \lambda y_j
\neq 0$, so
 $$0< \| y_k + \lambda y_j \|<1.$$ Consider now $y_k' = \frac{y_k + \lambda
y_j}{\| y_k + \lambda y_j \|} \in {\bf S}$. Then
\begin{multline*}
\!\! f(y_1,\dots,y_{k-1},y_k',y_{k+1},\dots,y_n)
   = \frac{|\det(y_1, \dots,y_{k-1}, y_k + \lambda y_j,y_{k+1}, \dots, y_n)|}{\| y_k + \lambda y_j \|} \\
    = \frac{|\det(y_1, \dots, y_k, \dots, y_n)|}{{\| y_k + \lambda y_j \|}}  = \frac{1}{{\| y_k + \lambda y_j \|}} f(y_1,\dots,y_n) > f(y_1,\dots,y_n),
\end{multline*}
a contradiction.
\end{proof}
It was shown by R\"atz \cite{Ratz} (see also  Sundaresan~\cite[Lemma~1]{Sundaresan}) that  BJ orthogonality in real normed spaces is Thalesian, that is, if $x,y$ are BJ orthogonal vectors in a real normed space, then for every $\lambda_0>0$ there exists a scalar $\alpha$ such that $(x+\alpha y)\perp  (\lambda_0 x-\alpha y)$. This fact was  used by W\'ojcik~\cite[Proof of Theorem 3.1]{Woj} to give an alternative proof that linear maps which preserve BJ orthogonality between real normed spaces are scalar multiples of isometries.  The main idea of the proof of the next lemma comes from
W\'ojcik's paper (see \cite[Proof of Theorem 3.1]{Woj}) and  uses a partial extension of Thalesian property for BJ orthogonality in \emph{complex} normed spaces:
we show that if unit vectors $x,y$ are mutually BJ orthogonal, then there exists a curve $\Gamma\subseteq\CC$ with more than one point, such that for every $\lambda\in\Gamma$ we can find $\alpha\in\CC$ with $(x+\alpha y) \perp (\lambda x-\alpha y)$.

Recall that an additive map $\Phi\colon\cX\to\cY$ between $\FF$-vector spaces $\cX$ and~$\cY$  is $\sigma$-quasilinear if  $\Phi(\lambda x)=\sigma(\lambda)\Phi(x)$ holds
where $\sigma\colon\FF\to\FF$ is a field homomorphism (if $\sigma$ is surjective, such maps are semilinear).

\begin{lemma}\label{lem:sigma-preserves-orthogonality}
    Let $\cX,\cY$ be   smooth normed spaces of dimension at least two over $\FF$, and let $\sigma\colon\FF\to\FF$ be a field homomorphism.
 If  a nonzero $\sigma$-quasilinear map $\Phi\colon\cX\to\cY$  preserves BJ orthogonality, then $\sigma$ is identity or a complex conjugation.
\end{lemma}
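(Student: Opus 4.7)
The real case is immediate because every nonzero field endomorphism of $\RR$ is the identity. I concentrate on $\FF = \CC$, where the goal is to show that $\sigma$ is continuous; once established, $\sigma|_\RR$ is a continuous field endomorphism of $\RR$ (hence the identity), and $\sigma(i)^2 = \sigma(-1) = -1$ forces $\sigma \in \{\id, \bar{\cdot}\}$. By Lemma~\ref{lem:sigma-bounded-on-curve}, continuity of $\sigma$ will follow once I exhibit a curve $\Gamma \subseteq \CC$ with more than one point on which $|\sigma|$ is bounded above.

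The setup follows W\'ojcik's strategy in the complex smooth setting. Since $\Phi$ is nonzero and $\sigma$-quasilinear, $\ker\Phi$ is a proper subspace of $\cX$, so I choose a 2-dimensional subspace $\cZ \subseteq \cX$ on which $\Phi$ is injective; Lemma~\ref{lemma:minimal-clique} inside $\cZ$ yields unit vectors $x, y$ with $x \perp y$, whose images $\Phi x, \Phi y$ are nonzero and linearly independent in $\cY$ and still satisfy $\Phi x \perp \Phi y$. Smoothness of $\cX$ produces a unique supporting functional $g_\alpha$ at every $x + \alpha y$, and by James' criterion the BJ orthogonality $(x + \alpha y) \perp (\lambda x - \alpha y)$ is equivalent to $\lambda\,g_\alpha(x) = \alpha\,g_\alpha(y)$. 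Since $g_0 = f_x$ with $g_0(x) = 1$ and the assignment $\alpha \mapsto g_\alpha$ is continuous in smooth spaces, for $\alpha$ close to $0$ the required $\lambda$ is uniquely given by the continuous function $\lambda(\alpha) := \alpha\,g_\alpha(y)/g_\alpha(x)$ with $\lambda(0) = 0$. The image of a short path starting at $0$ under $\lambda$ traces a curve $\Gamma \subseteq \CC$ through $0$ that has more than one point, provided $\lambda$ is nonconstant near the origin.

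Transporting via the BJ-preserving, $\sigma$-quasilinear $\Phi$ and using the analogous unique supporting functional $\tilde g_\beta$ at $\Phi x + \beta \Phi y$ in smooth $\cY$, the relation
\[
 (\Phi x + \sigma(\alpha)\Phi y) \perp (\sigma(\lambda(\alpha))\Phi x - \sigma(\alpha)\Phi y)
\]
rewrites as the identity $\sigma(\lambda(\alpha)) = \mu(\sigma(\alpha))$, where $\mu(\beta) := \beta\,\tilde g_\beta(\Phi y)/\tilde g_\beta(\Phi x)$. Because $\tilde g_0(\Phi y) = 0$ (as $\Phi x \perp \Phi y$) and the supporting functional depends continuously on its base point, $|\mu(\beta)|/|\beta| \to 0$ as $\beta \to 0$, so there is a constant $C > 0$ with $|\mu(\beta)| \leq C|\beta|$ on some disk about the origin.

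Two obstacles form the heart of the proof. The first, of a technical nature, is to rule out the degenerate case $g_\alpha(y) \equiv 0$ on a neighborhood: this would force $(x + \alpha y) \perp y$ for all small $\alpha$, which together with $x \perp y$ would make the unit sphere of $\cZ$ contain a flat segment through $x$ in the direction $y$; since the unit sphere of the smooth 2-dimensional space $\cZ$ cannot be flat at every point in every direction, the pair $(x, y)$ can always be perturbed within $\cZ$ to avoid this. The \emph{main} obstacle is that the derived pointwise estimate $|\sigma(\lambda(\alpha))| \leq C|\sigma(\alpha)|$ does not by itself bound $|\sigma|$ along $\Gamma$, because $|\sigma(\alpha)|$ is uncontrolled for $\alpha$ close to $0$ when $\sigma$ is discontinuous. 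My plan to overcome this is to exploit that $\sigma|_\QQ = \id_\QQ$: restricting $\alpha$ to rationals $r \in \QQ \cap [0,\epsilon]$ yields $|\sigma(\lambda(r))| = |\mu(r)| \leq Cr$, bounding $|\sigma|$ on the countable set $\lambda(\QQ \cap [0,\epsilon])$. To upgrade this to a bound on a genuine curve, I invoke the closure operations introduced in the proof of Lemma~\ref{lem:sigma-bounded-on-curve}: boundedness of $|\sigma|$ persists under translations, rotations, finite unions, and differences (by additivity and multiplicativity of $\sigma$), so by assembling finitely many rotated and translated copies of this countable set into a set $K$ and passing to $K - K$, one produces a line segment on which $|\sigma|$ is still bounded. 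Lemma~\ref{lem:sigma-bounded-on-curve} then forces $\sigma$ to be continuous, completing the proof.
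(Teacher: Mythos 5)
Your overall strategy is the paper's: produce, via a complex Thalesian-type construction in a two-dimensional subspace, a nondegenerate curve on which $|\sigma|$ is bounded above, and then invoke Lemma~\ref{lem:sigma-bounded-on-curve}. The fatal problem is your last step. Restricting to rational $\alpha$ gives you a bound for $|\sigma|$ only on the \emph{countable} set $\lambda(\QQ\cap[0,\epsilon])$, and no finite combination of translations, rotations, finite unions and difference sets of a countable set can ever produce a line segment or a set with nonempty interior: all of these operations preserve countability. The mechanism of Lemma~\ref{lem:sigma-bounded-on-curve} is not a formal closure property of ``sets on which $\sigma$ is bounded''; it rests on the topological fact that a compact connected curve separates the plane, whence $\hat\Gamma-\hat\Gamma$ has interior. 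That input is exactly what you destroy by passing to rationals, and boundedness on a dense subset of the curve does not transfer to the curve, since continuity of $\sigma$ is precisely what is at stake. So the proof does not close.

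The paper's proof avoids the need to control $\sigma(\alpha)$ for small $\alpha$ altogether. Instead of solving for $\sigma(\lambda(\alpha))$ through the supporting functional in $\cY$ and estimating it from above, it chains the two orthogonality relations into norm inequalities:
$$\|\Phi(x)\|\le\|\Phi(x)+\sigma(\alpha)\Phi(y)\|=\|\Phi(x+\alpha y)\|\le\|\Phi(x+\alpha y)+\Phi(\lambda(\alpha)x-\alpha y)\|=|1+\sigma(\lambda(\alpha))|\,\|\Phi(x)\|,$$
which yields the \emph{lower} bound $|1+\sigma(\lambda(\alpha))|\ge 1$ for \emph{every real} $\alpha\in[0,\varepsilon]$, with no information about $\sigma(\alpha)$ required; multiplicativity of $\sigma$ then converts this into the upper bound $|\sigma(\mu)|\le 1$ on the genuine curve $\mu\in\frac{1}{1+\Gamma}$, to which Lemma~\ref{lem:sigma-bounded-on-curve} applies. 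Two secondary issues in your write-up: (a) you cannot in general choose a two-dimensional subspace on which $\Phi$ is injective (if $\Ker\Phi$ had codimension one, every plane would meet it); the paper only needs one of the two orthogonal vectors to have nonzero image, which Lemma~\ref{lemma:minimal-clique} supplies. (b) Your disposal of the degenerate case $g_\alpha(y)\equiv 0$ is not complete: smoothness does not forbid flat segments on the unit sphere, so ``perturbing the pair'' needs an actual argument that a suitable nondegenerate orthogonal pair exists.
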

\begin{proof}
For real spaces this is clear because identity is the only nonzero field homomorphism of $\RR$. In the sequel we assume $\cX,\cY$ are complex normed spaces, so $\FF=\CC$.

By Lemma~\ref{lemma:minimal-clique}
there exist two mutually BJ orthogonal unit vectors $x,y\in\cX$ such that also $\Phi(x)\neq 0$.  We identify the two-dimensional normed subspace $\Span_{\CC}\{x,y\}\subseteq\cX$ with the space  $(\CC^2,\|\cdot\|)$ of row vectors  in such a way that  $x,y$  are identified with standard basis vectors, that is, $x=(1,0)$ and $y=(0,1)$.  Recall that a $\CC$-linear supporting functional at a point $(1,\alpha )\in\CC^2$ is given~by
$$f_{\alpha}=\left(\frac{\partial\|(1,\alpha)\|}{\partial \overline{z_1}}\,,\,\frac{\partial\|(1,\alpha)\|}{\partial \overline{z_2}}\right)^\ast, \quad (z_k=x_k+i y_k).$$
We have
$$ \overline{ \left( \frac{\partial\|(1,\alpha)\|}{\partial \overline{z_k}} \right)}  = \frac{\partial \overline{\|(1,\alpha)\|}}{\partial z_k} = \frac{\partial\|(1,\alpha)\|}{\partial z_k}
$$
for $k = 1, 2$, so the kernel of $f_{\alpha}$ contains a row vector
\begin{equation}\label{eq:w_alpha}
\left( \frac{\partial\|(1,\alpha)\|}{\partial z_2} \,,\, - {\frac{\partial\|(1,\alpha)\|}{\partial z_1}}\right).
\end{equation}
Therefore, for every scalar $\alpha$ we get $(1,\alpha )\perp\left(\frac{\partial\|(1,\alpha)\|}{\partial z_2} \,,\, - {\frac{\partial\|(1,\alpha)\|}{\partial z_1}} \right)$.

Since $x,y$ are mutually BJ orthogonal, their supporting functionals equal $$\left(\frac{\partial\|(1,0)\|}{\partial \overline{z_1}},\frac{\partial\|(1,0)\|}{\partial \overline{z_2}}\right)^\ast=\mu_x\cdot(1,0)^\ast \; \hbox{ and } \left(\frac{\partial\|(0,1)\|}{\partial \overline{z_1}},\frac{\partial\|(0,1)\|}{\partial \overline{z_2}}\right)^\ast=\mu_y\cdot (0,1)^\ast,$$ for some nonzero $\mu_x,\mu_y\in\CC$, respectively.  Since partial derivatives of a smooth norm are continuous (see Rockafellar \cite[Theorem 25.5]{Rockafellar}), the first supporting functional equals the limit of $\left(\frac{\partial\|(1,\alpha)\|}{\partial \overline{z_1}},\frac{\partial\|(1,\alpha)\|}{\partial \overline{z_2}}\right)^\ast
$ as $\alpha\in\RR$ goes to~$0$. Next, positive homogeneity of the norm implies $\frac{\partial\|(1,\alpha)\|}{\partial \overline{z_k}}=\frac{\partial\|(1/\alpha,1)\|}{\partial \overline{z_k}}$ ($\alpha>0$), so  the  second supporting functional equals the limit of $\left(\frac{\partial\|(1,\alpha)\|}{\partial \overline{z_1}},\frac{\partial\|(1,\alpha)\|}{\partial \overline{z_2}}\right)^\ast$ as $\alpha\in\RR$ goes to~$\infty$.
Hence, the vector
$$
w_{\alpha}=\frac{\frac{\partial\|(1,\alpha)\|}{\partial z_2}}{\frac{\partial\|(1,\alpha)\|}{\partial z_1}}\alpha x - \alpha y
$$
is a well-defined function of $\alpha$ around $\alpha=0$, and is clearly parallel to \eqref{eq:w_alpha}, so $(x+\alpha y)\perp w_\alpha$.
Also, its first component, i.e.,
$$\lambda(\alpha):=\frac{\frac{\partial\|(1,\alpha)\|}{\partial z_2}}{\frac{\partial\|(1,\alpha)\|}{\partial z_1}}\alpha,$$
is a continuous function which cannot be constantly zero because the numerator and  the denominator are nonconstant functions and cannot vanish simultaneously.
As such, with $\alpha$ restricted to a suitable closed interval $I=[0,\varepsilon]$, its range $\Gamma$ is a curve with more than one element in $\CC$.

Thus, for every  $\alpha\in [0,\varepsilon]$ we have $(x+\alpha y)\perp(\lambda(\alpha) x-\alpha y)$. Since we assumed that $\sigma$-quasilinear $\Phi$ preserves BJ orthogonality, we get that
$$\Big(\Phi(x)+\sigma(\alpha)\Phi(y)\Big)\perp \Big(\sigma(\lambda(\alpha))\Phi(x)-\sigma(\alpha)\Phi(y)\Big).$$
From here, and as $x\perp y$ imply $\Phi(x)\perp\Phi(y)$, the very definition of BJ orthogonality of $\Phi(x)$ and $\Phi(y)$ gives
\begin{equation*}
\begin{aligned}
\|\Phi(x)\|&\le \|\Phi(x)+\sigma(\alpha)\Phi(y)\|=\|\Phi(x+\alpha y)\|\\
&\le\|\Phi(x+\alpha y)+\Phi(\lambda(\alpha) x-\alpha y)\|=|1+\sigma(\lambda(\alpha))|\cdot\|\Phi(x)\|
\end{aligned}
\end{equation*}
so that $|\sigma(\mu)|$ is bounded from below by $1$ on the set  $\mu=1+\lambda(\alpha) \in 1+\Gamma\subseteq\CC$. Being multiplicative, it follows that $\sigma$ is bounded from above on the reciprocal set $\frac{1}{1+\Gamma}$, which is also a curve with more than one element. The result then follows from Lemma~\ref{lem:sigma-bounded-on-curve}.
 \end{proof}

The main result of this chapter was essentially proven  by Blanco and Turn\v sek in \cite[Corollary 3.4]{Blanco-Turnsek} (but see also  recent papers by Tanaka~\cite{Tanaka1,Tanaka2}). The proof given there assumed an infinite-dimensional reflexive smooth space and also assumed the domain coincides with the range. Only minor adaptations are needed to classify isomorphisms instead of automorphisms between infinite-dimensional spaces. Additional lemmas are required to show continuity in  finite-dimensional spaces. We decided to present their arguments,   with all the necessary modifications, for the sake of convenience.
\begin{theorem}\label{thm:iso}
    Let $(\cX,\|\cdot\|_{\cX})$ and $(\cY,\|\cdot\|_{\cY})$ be smooth 
    Banach spaces over $\FF$ with $\cX$ reflexive and  $3\le\dim \cX\le\infty$. If $T\colon \hat{\Gamma}(\cX) \to \hat{\Gamma}(\cY)$  is a di-orthograph isomorphism, then $\dim\cY=\dim\cX$ and
 there exists a linear or conjugate-linear surjective map $U\colon \cX \to\cY$  such that
 $$\|Ux\|_{\cY}=\|x\|_{\cX}.$$
\end{theorem}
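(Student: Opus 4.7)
The strategy is to upgrade the di-orthograph isomorphism $T$ to a (conjugate-)linear bijective isometry in three stages: match dimensions, lift $T$ to a semilinear bijection via projective geometry, and finally extract the isometric property.

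For dimension matching, I would invoke Proposition~\ref{prop:inft-dim} in the infinite-dimensional case (the isomorphism transfers infinite cliques) and Remark~\ref{rem:dim} in the finite-dimensional case (the dimension is graph-theoretic data). Hence $\dim\cY=\dim\cX$.

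For the lift, the key observation is that smoothness of $\cX$ converts BJ orthogonality into hyperplane containment: for each line $x$, the out-neighborhood $N_x=\{y:x\perp y\}$ is exactly the family of lines lying in the kernel of the unique supporting functional $f_x$, and similarly for $\cY$. Reflexivity of $\cX$ guarantees that every closed hyperplane is of the form $\Ker f_x$, so the family $\{N_x\}$ exhausts the projective hyperplanes of $\cX$. Therefore $T$ induces a bijection between projective hyperplanes in $\cX$ and in $\cY$, and by taking intersections it preserves all projective subspaces, in particular projective lines. Since $\dim\cX\ge 3$, the Fundamental Theorem of Projective Geometry produces a bijective $\sigma$-quasilinear map $U\colon\cX\to\cY$ inducing $T$ on lines, where $\sigma\colon\FF\to\FF$ is a field homomorphism. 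By construction $U$ preserves BJ orthogonality, so Lemma~\ref{lem:sigma-preserves-orthogonality} forces $\sigma$ to be the identity or complex conjugation; in particular $U$ is linear or conjugate-linear and surjective.

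To extract the isometry, I would normalize $U$ so that $\|Ue\|_\cY=\|e\|_\cX$ for some fixed unit vector $e\in\cX$. The ratio $\|Ux\|_\cY/\|x\|_\cX$ is constant on each line by (conjugate-)linearity. A two-dimensional section argument --- compare Blanco--Turn\v{s}ek~\cite{Blanco-Turnsek} and the calculations inside Lemma~\ref{lem:sigma-preserves-orthogonality} --- shows that preservation of BJ orthogonality rigidly couples the norms across any pair of lines, so this ratio is globally constant; combined with the normalization this yields $\|Ux\|_\cY=\|x\|_\cX$ for every $x\in\cX$. Lemma~\ref{lemma:minimal-clique} provides the pairwise BJ-orthogonal families needed to propagate the comparison throughout $\cX$.

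The main obstacle is the collineation step. Smoothness alone delivers only half of the hyperplane correspondence; the other half --- that $T$ already sees \emph{every} closed hyperplane --- hinges on reflexivity of $\cX$ and Hahn--Banach. In infinite dimensions this essentially follows Blanco--Turn\v{s}ek after minor adjustments allowing distinct domain and range. In finite dimensions the hyperplane issue evaporates, but a new obstruction appears: the Fundamental Theorem of Projective Geometry gives an a~priori arbitrary field homomorphism $\sigma\colon\CC\to\CC$, which in the complex case could be highly pathological. Defusing this pathology is precisely the role of Lemma~\ref{lem:sigma-preserves-orthogonality}, whose curve-boundedness argument together with the Thalesian-type orthogonality identity is what permits the finite-dimensional complex case to close.
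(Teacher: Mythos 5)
Your proposal follows essentially the same route as the paper: dimension matching via Proposition~\ref{prop:inft-dim} and Remark~\ref{rem:dim}, a dual/hyperplane correspondence built from smoothness and reflexivity of $\cX$, the Fundamental Theorem of Projective Geometry to produce a $\sigma$-quasilinear lift, Lemma~\ref{lem:sigma-preserves-orthogonality} to pin down $\sigma$, and Blanco--Turn\v{s}ek to upgrade the orthogonality-preserving (conjugate-)linear map to a scalar multiple of an isometry. The one imprecision is the claim that $T$ induces a bijection between \emph{all} projective hyperplanes of $\cX$ and of $\cY$: since $\cY$ is not assumed reflexive, the correspondence only reaches the hyperplanes of $\cY$ cut out by norm-attaining functionals, but (as you indicate) Hahn--Banach extension of a norm-attaining functional from a three-dimensional section supplies enough of these to force collinearity preservation, which is exactly how the paper closes this step.
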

\begin{proof}
    By Proposition~\ref{prop:inft-dim},
 $\dim\cX<\infty$ if and only if $\dim \cY<\infty$.  It then follows from Lemma~\ref{lem:dim} and Remark~\ref{rem:dim} that $\dim\cX=\dim\cY$.

Define a map $S\colon\PP\cX^\ast\to\PP\cY^\ast$ between projectivizations of  dual spaces of $\cX$ and $\cY$ by $[f]\mapsto [f_{T[x]}]$. Here, $x$ is  (any) unit vector where  $f$ achieves its norm (it exists because of reflexivity of $\cX$)
and $f_{T[x]}$ is a unique supporting functional at
some nonzero vector in a line $T[x]\in\PP\cY$. Note  that $S$ is well-defined because if $f$ achieves its norm on unit vectors $x,y\in\cX$, then $f$ is a unique supporting functional for $[x]$ and $[y]$, giving that $N_{[x]}=N_{[y]}=\Ker f$ (notations copied from Lemma~\ref{lemma:not-strictly-convex}).
Therefore, also $N_{T[x]}=N_{T[y]}$ and since $\cY$ is smooth, the supporting functionals at $T[x],T[y]\in\PP\cY$ coincide.

The last argument shows at once that $S$ is not only  well defined but also  injective.  Also, the surjectivity of $T$ implies  that the range of $S$ consists of all lines spanned by norm-attaining functionals.
It is easily seen that, given $f\in\cX^*$ and $z \in\cX$, we have $f(z) = 0$ if and only if,
for every functional $g \in  S[f]$ and vector $y \in T[z]$, we have $g(y) = 0$.

We claim that $T$ is a morphism of projective spaces. Namely, assume otherwise that there exist the lines $[x],[y],[z]\in\hat{\Gamma}$ which satisfy
\begin{equation}\label{eq:morphism}
      [z]\subseteq[x]+[y]
\end{equation} but their images $T[x],T[y],T[z]$ span a three-dimensional space. Then we can choose a norm-attaining unit functional $g\colon\cY\to\FF$ which annihilates $T[x]$ and $T[y]$ but not $T[z]$.
Clearly, the line spanned by such $g$ belongs to the range of $S$, so $[g]=S[f]$, and $f$ annihilates $[x]$ and $[y]$ but not the line $[z]$ contradicting \eqref{eq:morphism}.

 Indeed, $T$ is a bijective morphism of projective spaces. By the (nonsurjective version of) Fundamental Theorem of Projective Geometry (see Faure~\cite{Faure}),
 there
exist
a field isomorphism $\sigma\colon\FF\to\FF$ and a $\sigma$-quasilinear map  $V\colon \cX \to\cY$  such that $T[x] = [V x]$ for $x\in\cX \setminus \{0\}$. Clearly, $V$ must be orthogonality preserving, so, by  Lemma
\ref{lem:sigma-preserves-orthogonality}, $\sigma$ is either identity or a complex conjugation.
It follows by the main result of \cite{Blanco-Turnsek} (but see also \cite{Woj}) that the (conjugate) linear orthogonality preserving map $V\colon (\cX,\|\cdot\|_{\cX})\to(\cY,\|\cdot\|_{\cY})$ is a scalar multiple of an isometry (i.e., there exists a scalar $\mu\in\CC$ such that $U:=\mu V$ satisfies $\|Ux\|_{\cY}=\|x\|_{\cX}$).
\end{proof}
\begin{remark}
If $\cY$ is reflexive, then in the proof above the map $S$ is surjective and one could use   Moln\'ar's result, \cite[Corollary 1]{Molnar}, to show that $T$ is induced by a $\sigma$-quasilinear bijection.
\end{remark}

\begin{remark} If $\cX$ is finite-dimensional and smooth, then in the theorem above the smoothness of  $\cY$ need not be assumed in advance;  it follows  by Lemma~\ref{lem:smooth-norm} (after Lemma~\ref{lem:dim} and Remark~\ref{rem:dim} establish that  $\dim\cX=\dim\cY$).
\end{remark}

\section{Concluding examples and remarks}

In Banach spaces of dimension two the graphs related to two norms can be isomorphic, even if
the two norms are not isomorphic (i.e., Theorem~\ref{thm:iso} may fail).
\begin{example}
Consider an odd integer $p\ge 3$ and let $\|\cdot\|_p$
be the  $p$-norm on the two-dimensional real sequence
space~$\RR^2$. This is clearly a smooth norm with  the differential
$$
\partial \|(x,y)\|_p= \frac{(|x|^{p-1}\sign\,x,|y|^{p-1}\sign\,y)}{(|x|^p+|y|^p)^{1-1/p}},  \quad (x,y)\neq (0,0).
$$
For every $(x_0,y_0)\in \RR^2,$ the functional $(x,y) \mapsto \langle (x,y),\partial\|(x_0,y_0)\|_p\rangle$
has a kernel spanned by $(|y_0|^{p-1}\sign\,y_0, -|x_0|^{p-1}\sign\,x_0).$ By
\eqref{eq:f_x=grad} it follows that a unique line which is BJ orthogonal to a line $\RR(x,y)$ equals
$$\RR(|y|^{p-1}\sign\,y, -|x|^{p-1}\sign\,x).$$
Since $p$ is odd, the function $t\mapsto |t|^{p-1}\sign\,t$ is bijective on $\RR$.  Hence, for a given line
${\bf x}=\RR(x,y)$ there exists a unique line ${\bf a}=\RR(a,b)$ with ${\bf a}\perp {\bf x}$, it is given by
a unique solution of the two equations $|a|^{p-1}\sign\,a=-y$ and $|b|^{p-1}\sign\,b=x$. Thus, the di-orthograph $\hat{\Gamma}(\RR^2,\|\cdot\|_p)$ consists of copies of a  bilateral infinite directed path connecting (not
necessarily disjoint) vertices
$$\cdots\rightarrow v_{-1}\rightarrow v_0\rightarrow v_1\rightarrow\cdots.$$
If $v_0 = \RR(x,y)$ then for $k \in \NN$ we have $v_k=\RR f_p^k(x,y)$, where $f_p^k=f_p\circ\dots\circ f_p$ is a $k$-fold compose of
$$f_p\colon(x,y)\mapsto (|y|^{p-1}\sign\,y, -|x|^{p-1}\sign\,x).$$
One computes that
$$f_p^k(x,y)=\begin{cases}
(|x|^{(p-1)^k} \sign\,x, \ |y|^{(p-1)^k} \sign\,y), & k\equiv 0\pmod 4;\cr
(|y|^{(p-1)^k} \sign\,y,\ -|x|^{(p-1)^k} \sign\,x), & k\equiv 1\pmod 4;\cr
(-|x|^{(p-1)^k} \sign\,x,\ -|y|^{(p-1)^k} \sign\,y), & k\equiv 2\pmod 4;\cr
(-|y|^{(p-1)^k} \sign\,y,\ |x|^{(p-1)^k} \sign\,x), & k\equiv 3\pmod 4.\cr
\end{cases}$$
 Assume that $v_0=v_k$ for some $k \in \NN$ in this path.
Notice that $ \RR(x,y)=\RR f_p^k(x,y) $  if and only if the determinant of a matrix with rows $(x,y)$ and
$f_p^k(x,y)$ vanishes. By using the fact that $p-1$ is even, so $|x|^{(p-1)^k} = x^{(p-1)^k}$, this reduces,
for $k$ even into $(x,y)\in\RR(1,0)$ or $(x,y)\in\RR(0,1)$ or $(x,y)\in\RR(1,1)$ or $(x,y)\in\RR(1,-1)$. For
$k$ odd it reduces into $|x|^s+|y|^s=0$ for  a suitable integer $s$ in which case there are no solutions.

Hence, the di-orthograph $\hat{\Gamma}(\RR^2,\|\cdot\|_p)$ consists of copies of bilateral infinite directed paths
with pairwise disjoint vertices and exactly two disjoint undirected edges $\RR(1,0)\leftrightarrow\RR(0,1)$
and $\RR(1,1)\leftrightarrow\RR(1,-1)$ (if the graph contains an edge together with its opposite edge we call
such edge undirected).

 Thus, for all odd $p$ such graphs are isomorphic but clearly the two-dimensional $\|\cdot\|_p$-spaces are
nonisometric for different $p\in(1,\infty)$.
\end{example}

There is a close relation between a Banach space $\cX$ and its dual $\cX^\ast$. This reflects also in di-orthographs:

\begin{example}
Let $(\cX,\|\cdot\|)$ be a smooth and strictly convex reflexive Banach space and let
$(\cX^\ast,\|\cdot\|^\ast)$ be its dual. The procedure which to each line $x\subseteq\cX$  associates a unique
line  containing the supporting functional $f_x$ at  $x$ induces an antiisomorphism between  di-orthographs
$\hat{\Gamma}(\cX)$ and $\hat{\Gamma}(\cX^\ast)$.

This follows from the observation that $x\perp  y$ if and only if $f_x(y)=0$ where $f_x$ is the (unique)
supporting functional at $x$. Notice that the point evaluation functional $F_x\colon\cX^\ast \to \FF$, given by
$f\mapsto f(x)$, is a supporting functional at $f_x$. Thus, $x\perp  y$ implies that $F_y(f_x)=f_x(y)=0$ giving
$f_y\perp  f_x$. The converse implication follows by reflexivity and the fact the dual norm $\|\cdot\|^\ast$
is also smooth because $\cX$ is strictly convex.
\end{example}

\begin{remark}
Clearly, in $\dim\cX\ge 3$ there are infinitely many lines in the kernel of a nonzero linear functional and
hence, given $x\in\hat{\Gamma}(\cX)$, there are infinitely many vertices $y\in\hat{\Gamma}(\cX)$ with
$x\rightarrow y$.  The converse is also true: given $x\in\hat{\Gamma}(\cX)$, there exist
infinitely many vertices $y$ with $y\rightarrow x$. Indeed, take an arbitrary $z \in \hat{\Gamma}(\cX)$ which is not parallel to $x$, and let $\cY = \Span\{x,z\}$. Consider a unit functional $f: \cY \to \FF$ which annihilates $x$ and choose a unit vector $y \in \cY$ where $f$ achieves its norm. Then $y \perp x$ in $\cY$ and, therefore, in $\cX$, so we have $y \rightarrow x$. Clearly, $y \not\in \Span\{x\}$. Since $\dim \cX \ge 3$ and $z$ is arbitrary, we obtain an infinite number of $y$ such that $y \rightarrow x$.

In the case when $\FF = \RR$, the proof can be obtained geometrically. One can imagine a hyper-cylindrical surface circumscribed around the norm's unit ball in $\cX$, whose axis is parallel to $x$. This surface has an infinite number of points of contact with the norm's unit ball, and each point of contact $y$ satisfies $y \rightarrow x$ in $\hat{\Gamma}(\cX)$.
\end{remark}

Our final example accompanies Lemma~\ref{lemma:not-strictly-convex}.
\begin{example} If  $N_x=N_y$ for some unit vectors, then $x,y$ share the same supporting functional.

    Namely, suppose otherwise and consider the two-dimensional subspace $\cN:=\Span_{\FF}\{x,y\}$. By the Hahn--Banach theorem, $x,y$ also do not share the same supporting functional inside a two-dimensional subspace $\cN$. Note that there does exist a supporting functional $f_x\colon\cN\to\FF$ at $x$, and hence there exists some unit $z\in\cN$  in its kernel. Clearly, then $x\perp z$, so $z\in N_x=N_y$. Therefore, $y\perp z$, and hence $z$ is annihilated by some supporting functional $f_y\colon \cN\to\FF$ at $y$.
    Since
    $\cN$ is two-dimensional and $\Ker f_x \cap \Ker f_y$ is nontrivial, it follows that $\Ker f_x = \Ker f_y$ which implies that $f_y$ is a unimodular scalar multiple of $f_x$, a contradiction.
   \end{example}
\begin{remark}
 Note that the converse implication does not hold. Consider, e.g., $x=(1,1)$ and $y=(1,0)$ in $(\RR^2,\|\cdot\|_\infty)$, which share the same supporting functional but $N_x\neq N_y$.
\end{remark}


\begin{thebibliography}{99}


 \bibitem{ABDOLLAHI-SHAHVERDI} A. Abdollahi and H. Shahverdi, Characterization of the alternating group by its     non-commuting graph. J. Algebra {\bf 357} (2012), 203--207.
 \bibitem{Acz-Dho} J. Aczel and J. Dhombres, Functional equations in several variables. Series: Encyclopedia of Mathematics and its Applications, vol. 31, Cambridge UP 1989.
 \bibitem{Amir} D. Amir, Characterizations of inner product spaces.
Operator Theory: Advances and Applications, 20. Birkh\"auser Verlag, Basel, 1986.
 \bibitem{Anderson} J. Anderson, On normal derivations.     Proc. Amer. Math. Soc. {\bf 38} (1973), 135--140.
  \bibitem{Ara-Gut-Kuz-Raj-Zhi} Lj. Aramba\v si\' c,  A.  Guterman, R. Raji\' c,  B. Kuzma,  and S. Zhilina, Symmetrized Birkhoff--James orthogonality in arbitrary normed spaces.  J. Math. Anal. Appl. {\bf 502~(1)} (2021), 125203.
 \bibitem{Beauzamy} B. Beauzamy. Introduction to Banach spaces and their geometry. North-Holland Mathematics Studies, vol. 68 (Amsterdam: North-Holland, 1982).
 \bibitem{BhatiaSemrl} R. Bhatia and P. \v Semrl, Orthogonality of matrices and some distance problems.    Linear     Algebra Appl. \textbf{287} (1999), 77--86.
 \bibitem{Birkhoff} G. Birkhoff,  Orthogonality in linear metric spaces.  Duke Math. J. {\bf 1} (1935),     169--172.
 \bibitem{Blanco-Turnsek} A.  Blanco and A. Turn\v{s}ek, On maps that preserve orthogonality in normed spaces.     Proc.    Roy.    Soc. Edinburgh Sect. A {\bf 136 (4)} (2006), 709--716.
 \bibitem{Bohnenblust} F. Bohnenblust, A characterization of complex  Hilbert spaces.  Portugal Math. {\bf 3} (1942), 103--109.

 \bibitem{Dol-Kuz-Sto}  G. Dolinar, B. Kuzma, and N.  Stopar,  Characterization of orthomaps on the Cayley plane.     Aequationes Math. {\bf 92 (2)} (2018), 243--265.
 \bibitem{Dol-Kuz-Sto1}  G. Dolinar, B. Kuzma, and N.  Stopar,  The orthogonality relation classifies formally real     simple Jordan algebras. Commun. Algebra {\bf 48 (6)} (2020), 2274--2292.
 \bibitem{Dugundji} J. Dugundji, Topology. Wm.~C.~Brown Publishers,  Dubuque, Iowa, 1989.
 \bibitem{Faure} C.-A. Faure,  An elementary proof of the fundamental theorem of projective geometry. Geom. Dedicata. \textbf{90} (2002), 145--151.
 \bibitem{Han-Chen-Guo} Z. Han, G. Chen, and X. Guo, A characterization theorem for sporadic simple groups. Sib.     Math. J. {\bf 49} (2008), 1138--1146.

   \bibitem{Hure-Wall} W. Hurewics and H. Wallman, Dimension theory.  Princeton University Press, 1941.
 \bibitem{James} R.C. James, Orthogonality and linear functionals in normed linear spaces. Trans. Amer. Math.     Soc. {\bf 61} (1947), 265--292.
 \bibitem{James1} R.C. James, Inner products in normed linear spaces. Bull. Amer. Math. Soc. \textbf{53}     (1947),   559--566.
 \bibitem{James2} R.C. James, Orthogonality in normed linear spaces. Duke Math. J. \textbf{12}     (1945), 291--302.
 \bibitem{Kakut} S. Kakutani, Some characterizations  of Euclidean spaces.  Jpn. J. Math. {\bf 16} (1939), 93--97.

 \bibitem{Kit} F. Kittaneh. Operators that are orthogonal to the range of a derivation. J. Math. Anal. Appl. {\bf 203} (1996), 868--873.

 \bibitem{Kuzma} B. Kuzma,  Dimensions of complex Hilbert spaces are determined by the commutativity relation.     J. Operator Theory {\bf 79 (1)} (2018), 201--211.
 \bibitem{Martini-Swanepoel-Weiss}  H. Martini,   K.J. Swanepoel, and G. Wei\ss,  The geometry of Minkowski spaces—a survey. I,     Expo. Math. {\bf 19 (2)} (2001), 97--142.
 \bibitem{Molnar} L. Moln\'ar, Orthogonality preserving transformations on indefinite inner product spaces:     generalization of Uhlhorn's version of Wigner's theorem. J. Funct. Anal. {\bf 194 (2)} (2002), 248--262.
 \bibitem{Ratz}J. R\"atz, On orthogonally additive mapping. Aequationes Math. {\bf 28} (1985) 35--49.
 \bibitem{Rockafellar}  R.T. Rockafellar, Convex Analysis. Princeton U.P., 1970.
\bibitem{Scepin} E.V. Shchepin, The dimension of a sum of curves. Uspekhi Mat. Nauk   {\bf 30  4(184)} (1975), 267--268 (in Russian).
\bibitem{Simon-Taylor} K. Simon and K. Taylor, Interior of sums of planar sets and curves.     Math. Proc. Cambridge Philos. Soc. {\bf 168 (1)} (2020),  119--148.
 \bibitem{Sol-Wol} R. Solomon and A. Woldar, Simple groups are characterized by their non-commuting graphs. J.     Group Theory {\bf 16} (2013), 793--824.
  \bibitem{Sundaresan} K. Sundaresan, Orthogonality and nonlinear functionals on Banach spaces. Proc. Amer. Math. Soc. {\bf 34} (1972), 187--190.
  \bibitem{Tanaka1}  R. Tanaka,  Nonlinear equivalence of Banach spaces based on Birkhoff-James orthogonality. J. Math. Anal. Appl. {
\bf 505 (1)} (2022), Paper No. 125444, 12 pp.
\bibitem{Tanaka2} R. Tanaka,  On Birkhoff-James orthogonality preservers between real non-isometric Banach spaces. arXiv:2108.00655.


 \bibitem{Taylor}  A.E. Taylor, A geometric theorem and its application to biorthogonal systems. Bull. Amer. Math. Soc. {\bf 53} (1947), 614--616.
 \bibitem{Woj}  P. W\'{o}jcik,  Mappings preserving B-orthogonality. Indag. Math. (N.S.) {\bf 30 (1)} (2019),  197--200.

\end{thebibliography}
\end{document}